\pgfplotsset{compat=1.7}
\newtheorem{theorem}{Theorem}[section]
\newtheorem{corollary}{Corollary}[section]
\newtheorem{remark}{Remark}[section]
\newtheorem{example}{Example}[section]
\newtheorem{lemma}{Lemma}[section]
\newtheorem{definition}{Definition}[section]
\newtheorem{proposition}{Proposition}[section]
\newtheorem{problem}{Problem}[section]
\newcommand*{\Zset}{\mathbb{Z}}
\begin{document}
\title{{\Large \bf Studying the inertias of LCM matrices and revisiting the Bourque-Ligh conjecture}}
\author{{\sc Mika Mattila\footnote{Corresponding author, who was supported by the Finnish Cultural Foundation, Pirkanmaa Regional Fund.}, Pentti Haukkanen and Jori Mäntysalo}
\\Faculty of Information Technology and Communication Sciences,\\Tampere University, Finland
\\E-mail:  mika.mattila@tuni.fi, pentti.haukkanen@tuni.fi,\\ jori.mantysalo@tuni.fi}
\maketitle
\setcounter{section}{0}
\setcounter{equation}{0}

\abstract{Let $S=\{x_1,x_2,\ldots,x_n\}$ be a finite set of distinct positive integers. Throughout this article we assume that the set $S$ is GCD closed. The LCM matrix $[S]$ of the set $S$ is defined to be the $n\times n$ matrix with $\mathrm{lcm}(x_i,x_j)$  as its $ij$ element.  The famous Bourque-Ligh conjecture used to state that the LCM matrix of a GCD closed set $S$ is always invertible, but currently it is a well-known fact that any nontrivial LCM matrix is indefinite and under the right circumstances it can be even singular (even if the set $S$ is assumed to be GCD closed). However, not much more is known about the inertia of LCM matrices in general. The ultimate goal of this article is to improve this situation. Assuming that $S$ is a meet closed set we define an entirely new lattice-theoretic concept by saying that an element $x_i\in S$ generates a double-chain set in $S$ if the set $\mathrm{meetcl}(C_S(x_i))\setminus C_S(x_i)$ can be expressed as a union of two disjoint chains (here the set $C_S(x_i)$ consists of all the elements of the set $S$ that are covered by $x_i$ and $\mathrm{meetcl}(C_S(x_i))$ is the smallest meet closed subset of $S$ that contains the set $C_S(x_i)$). We then proceed by studying the values of the Möbius function on sets in which every element generates a double-chain set and use the properties of the Möbius function to explain why the Bourque-Ligh conjecture holds in so many cases and fails in certain very specific instances. After that we turn our attention to the inertia and see that in some cases it is possible to determine the inertia of an LCM matrix simply by looking at the lattice-theoretic structure of $(S,|)$ alone. Finally, we are going to show how to construct LCM matrices in which the majority of the eigenvalues is either negative or positive.}

\medskip\noindent{\it Key words and phrases}: Bourque-Ligh conjecture, LCM matrix, GCD matrix, Smith determinant\\
\noindent{\it AMS Subject Classification:} 11C20, 15B36, 06A07

\section{Introduction}

LCM matrices, as well as GCD matrices, were first defined by H. J. S. Smith \cite{Smi} in his seminal paper from the year 1876. By letting $S=\{x_1,x_2,\ldots,x_n\}$ be a finite set of distinct positive integers, Smith defined the GCD matrix $(S)$ of the set $S$ to be the $n\times n$ matrix with $\gcd(x_i,x_j)$ as its $ij$ element. Similarly, the LCM matrix $[S]$ of the set $S$ is the $n\times n$ matrix with $\mathrm{lcm}(x_i,x_j)$ as its $ij$ element.  Although in these days Smith's paper is probably best remembered from its famous determinant formula for the GCD matrix with $\gcd(i,j)$ as its $ij$ element, Smith also considered the determinants of more general GCD and LCM matrices. For example, he showed that if the set $S$ is \emph{factor closed}, then both of the matrices $(S)$ and $[S]$ are invertible (the set $S$ is said to be factor closed if the condition
\[
y\,|\,x\quad\text{for\ some\ }x\in S\Rightarrow y\in S
\]
holds). Since Smith, many other authors have also considered the determinants of GCD-related matrices (see the references in \cite{HauWanSil}). In 1989 Beslin and Ligh \cite{BesLigh89} reintroduced GCD matrices and also initiated a series of articles studying GCD-type matrices and their generalizations. However, LCM matrices did not get much attention until the article \cite{Bour92} by Bourque and Ligh appeared. Among other things, in this article it is pointed out how easy it is to find singular LCM matrices by considering the LCM matrix of the set $S=\{1,2,15,42\}$ (see \cite[p. 68]{Bour92}). In that same paper the authors follow in the footsteps of Smith and are interested in finding a more general sufficient condition (comparing to the factor-closedness of the set $S$) for the invertibility of the LCM matrix $[S]$. They ended up conjecturing that the GCD-closedness of the set $S$ suffices to guarantee the invertibility of $[S]$.

In 1997 Haukkanen et al. \cite{HauWanSil} were able disprove the Bourque-Ligh conjecture by finding a singular LCM matrix of size $9\times 9$. Two years later Hong \cite{Hong99} was able to find another counterexample of size $8\times 8$. By using number-theoretic methods he also showed that the conjecture holds for GCD closed sets with at most $7$ elements, and thus the conjecture was solved completely (at least in some sense). However, since in the conjecture it is assumed that the set $S$ is GCD closed, the structure $(S,|)$ itself constitutes a meet-semilattice, which enables one to study the conjecture from an entirely lattice-theoretic point of view. In \cite{KorMatHau} Korkee et al. consider all possible semilattice structures with at most $7$ elements and showed that the LCM matrix $[S]$ is invertible for any GCD closed set $S$ with $|S|\leq7$. In \cite{MatHau3} this same lattice-theoretic approach is utilized to show that if the matrix $[S]$ is singular and the set $S$ is GCD closed with $8$ elements, then $(S,|)$ has unique, cube-like structure (see Figure \ref{fig:ex1} (e)). These same methods were also adapted by Altinisik et al. in \cite{Alt17}, where they study the singularity of the matrix $[S]$ in the case when $S$ is a GCD closed set with $9$ elements. At the same time the invertibility of the so-called power GCD and power LCM matrices have been studied from a number-theoretic point of view by several different authors, see e.g. \cite{Hong04, Li, Shen}.

There are several articles in which GCD-type matrices have been studied by using lattice-theoretic methods, see e.g. \cite{Alt05, HongSun, IlmKaar, Ovall}. Our first goal here is to study the problems arising from our own previous lattice-theoretic studies of the Bourque-Ligh conjecture \cite{MatHau3} and \cite{KorMatHau}. Although there seems to be nothing left to prove in the Bourque-Ligh conjecture itself, there are a couple of interesting problems relating to the conjecture that remain open. Especially we are interested to find some answers to the following questions:

\begin{itemize}
	\item Since there are more than 1300 meet semilattices with at most 8 elements, the lattice-theoretic method basically requires one to sieve off most of the irrelevant cases by using some mathematical program (e.g. SageMath).	Is there any elegant way to avoid this?
	\item In many cases the lattice structure of $(S,|)$ alone suffices to guarantee the invertibility of $[S]$. What are the required semilattice properties that make the lattice-theoretic method to work?
	\item Cube semilattice with 8 elements is the smallest possible counterexample for Bourque-Ligh conjecture. What makes this structure so special?
\end{itemize}

After answering these questions we turn our attention to the inertia of LCM matrices (of GCD closed sets). Currently very little is known about this topic. So far all the existing inertia-related results have been presented in the articles \cite{HauToth, MatHau4, Ovall}. It turns out that the work that we have done to study the invertibility of LCM matrices can actually be directly applied to study the inertia of LCM matrices. In fact, it may be even a bit surprising that in many cases we are able to determine the inertia of a given LCM matrix by looking only at the semilattice structure of $(S,|)$. Of course this is not always possible, and for that reason we also study the limits of our method (i.e. the cases in which our approach gives an inconclusive result). Another thing for us to consider is the question about how to construct a GCD closed set $S$ such that the number of either negative or positive eigenvalues of the matrix $[S]$ is maximized. 

We begin our study in Section \ref{sect:ABset} by defining a certain double-chain generating property, an entirely new lattice-theoretic concept, and also by studying some of the basic properties of sets in which every element possesses this property. In Section \ref{sect:Möbius} we are then able to calculate the values of the Möbius function on a set in which every generates a double-chain set and thereby find out that the Möbius function values can easily be determined from the Hasse diagram of the set $(S,|)$ simply by calculating the number of certain type of elements in $S$. In Section \ref{sect:inv} we are finally ready to apply the Möbius inversion and to give a proof to one of the main theorems and to explain how the structure of $(S,|)$ often causes the matrix $[S]$ to be invertible. In Section \ref{sect:inertia} we take another look at the proof of the main theorem of Section \ref{sect:inv} and we see that in those previously mentioned cases the structure of $(S,|)$ not only guarantees the invertibility of the LCM matrix $[S]$ but also determines the inertia of this matrix completely.

\section{\texorpdfstring{$A$}{A}-sets and double-chain generating sets}\label{sect:ABset}

Let $(P,\preceq)$ be a locally finite meet semilattice. In other words, we assume that $\prec$ is a partial order relation on $P$, for all $x,y\in P$ the interval $[x,y]:=\{z\in P\,\big|\,x\preceq z\preceq y\}$ is a finite set and for any pair of elements $x,y\in P$ there exists a unique greatest common lower bound denoted by $x\wedge y$.  Moreover, let $S=\{x_1,x_2,\ldots,x_n\}$ be a meet closed subset of $P$ (i.e. $x_i\wedge x_j\in S$ for all $x_i,x_j\in S$) with $x_i\preceq x_j\Rightarrow i\leq j$, and let $C_S(x)$ denote the set of elements of $S$ which are covered by $x$ in $S$. In other words,
\[
C_S(x)=\{y\in S\,\big|\,y\prec x\text{\ and for all\ }z\in S:\ (y\preceq z\prec x\Rightarrow y=z)\}.
\]
If $y\in C_S(x)$, then we may also use the notation $y\lessdot x$. Throughout this article we are interested in the poset-theoretic structure of the meet closure of the set $C_S(x)$ defined as
\[
\mathrm{meetcl}(C_S(x)):=\{y_1\wedge y_2\wedge\cdots\wedge y_k\,\big|\,k\in\mathbb{Z}_+\text{\ and\ }y_1,y_2,\ldots,y_k\in C_S(x)\}.
\]

\begin{definition}\cite[Definition 2.2]{ISmo}\label{def:A-set}
The set $S=\{x_1,x_2,\ldots,x_n\}\subseteq P$ is an $A$-set if the set $A=\{x_k\wedge x_l\,\big|\,1\leq k<l\leq n\}$ is a chain.
\end{definition}

\begin{definition}
An element $x\in S\subseteq P$ generates a double-chain set in $S$ if the set $\mathrm{meetcl}(C_S(x))\setminus C_S(x)$ in $P$ can be expressed as a union of two disjoint sets $A$ and $B$ that are chains in $P$. 
\end{definition}

In the previous definition it would be more accurate to say that \emph{$x$ generates a double-chain set in $S$ via the set $C_S(x)$}, but for practical reasons it makes sense to use briefer terminology. Moreover, if the set $S$ is clear from the context we may abbreviate our notation even further and just say that $x$ generates a double-chain. In the case when every element $x\in S$ generates a double-chain set in $S$ we may also say that \emph{$S$ is a double-chain set}. 

\begin{remark}
It should be noted that although the chains $A$ and $B$ are disjoint, it is well possible that $a_k\prec b_l$ or $b_l\prec a_k$ for some $a_k\in A$ and $b_l\in B$. For example, the element $\bigwedge C_S(x)$ may be chosen to belong to either of the sets $A$ and $B$, and thus it precedes all the elements in both of the chains. Even the top elements of the chains may be comparable. It is also possible that one of the chains is empty (or even both of them are). In the case when $x$ covers one element or no elements at all we have $A=B=\emptyset$.
\end{remark}

The following theorem shows that the concept of an $A$-set is in fact related to sets in which double-chains are generated. For this purpose we are also going to need the concept of a $\wedge$-tree set $S$. The set $S\subseteq P$ is said to be a $\wedge$-tree set if the Hasse diagram of $\mathrm{meetcl}(S)$ is a tree, see \cite[Definition 4.1]{MatHau2}.

\begin{theorem}\label{th:A-set}
If $S$ is an $A$-set, then every element $x_i\in S$ generates a double-chain set in $S$. If $x_j$ is a maximal element in $S$ and $S\setminus\{x_j\}$ is an $A$-set, then every element $x_i\in S$ generates a double-chain set in $S$.
\end{theorem}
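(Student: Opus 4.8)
The plan is to derive both assertions from a single structural observation, namely that for every $x_i\in S$ the entire set $\mathrm{meetcl}(C_S(x_i))\setminus C_S(x_i)$ already lies inside one chain. Once this is established, the decomposition required by the definition is obtained for free by letting one of the two chains be empty, which the preceding remark explicitly permits. So the whole theorem reduces to locating the ``new'' meet-elements inside the chain furnished by the $A$-set hypothesis.

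First I would record two elementary facts about $C_S(x_i)$. The set $C_S(x_i)$ is an antichain: if $y,y'\in C_S(x_i)$ with $y\prec y'$, then $y\prec y'\prec x_i$ contradicts $y\lessdot x_i$. The heart of the argument is the second fact: every $z\in\mathrm{meetcl}(C_S(x_i))\setminus C_S(x_i)$ is the meet of two \emph{distinct} elements of $S$. Indeed, write $z=y_1\wedge\cdots\wedge y_k$ with distinct $y_m\in C_S(x_i)$; since $z\notin C_S(x_i)$ we must have $k\geq 2$, and putting $w=y_2\wedge\cdots\wedge y_k\in S$ (meet-closedness of $S$) gives $z=y_1\wedge w$ with $y_1\neq w$, for otherwise $z=y_1\in C_S(x_i)$. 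Hence $z=x_k\wedge x_l$ for some indices $k\neq l$.

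For the first claim this already finishes everything: each such $z$ lies in the set $\{x_k\wedge x_l\mid k<l\}$, which is a chain by hypothesis, so $\mathrm{meetcl}(C_S(x_i))\setminus C_S(x_i)$ is contained in a chain and is therefore itself a chain. Thus $x_i$ generates a double-chain set, the second chain being empty. No further work is needed here.

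For the second claim the remaining effort is purely bookkeeping around the maximal element $x_j$, and this is the only point where I expect genuine care to be required. Writing $S'=S\setminus\{x_j\}$, I would first check that $S'$ is meet closed: for $x_k,x_l\in S'$ we have $x_k\wedge x_l\in S$, and $x_k\wedge x_l=x_j$ is impossible because $x_j\preceq x_k$ together with the maximality of $x_j$ would force $x_j=x_k\notin S'$. Next, since $x_j$ is maximal it covers nothing from above, so $x_j\notin C_S(x_i)$ for every $i$; consequently $C_S(x_i)\subseteq S'$ for \emph{all} $x_i\in S$, including $x_i=x_j$. One must also note that deleting the maximal $x_j$ does not alter the relevant covering relations, since a maximal element can never lie strictly between two comparable elements of $S$, so the sets $C_S(x_i)$ are genuinely computed inside $S'$. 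The meet-of-two argument of the second paragraph then applies verbatim with $w\in S'$, yielding $z=x_k\wedge x_l$ with $x_k,x_l\in S'$ distinct; since $S'$ is an $A$-set, these elements form a chain, and again $\mathrm{meetcl}(C_S(x_i))\setminus C_S(x_i)$ is contained in a chain for every $x_i\in S$. This completes the plan, with the maximality bookkeeping for $S'$ being the only delicate ingredient.
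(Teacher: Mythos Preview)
Your argument is correct, and it is genuinely different in spirit from the paper's. The paper handles the first assertion by invoking two external lemmas: every $A$-set is a $\wedge$-tree set, and in a $\wedge$-tree set each element covers at most one element, so $\mathrm{meetcl}(C_S(x_i))\setminus C_S(x_i)=\emptyset$ trivially. For the second assertion the paper again uses the tree-set property to dispose of all $x_i\neq x_j$, and only for $x_i=x_j$ does it observe that $\mathrm{meetcl}(C_S(x_j))\setminus C_S(x_j)$ sits inside the $A$-chain. Your route is more uniform and more self-contained: you show directly, without citing the $\wedge$-tree machinery, that every element of $\mathrm{meetcl}(C_S(x_i))\setminus C_S(x_i)$ is a meet of two distinct elements of the relevant $A$-set and hence lies in a single chain, treating every $x_i$ at once. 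The paper's approach buys the extra structural fact $|C_S(x_i)|\leq 1$ for $A$-sets, which is not needed here but is used elsewhere; your approach avoids external citations altogether. One minor remark: the sentence about covering relations being unchanged when $x_j$ is deleted is harmless but unnecessary, since you only ever use $C_S(x_i)$ computed in $S$, and you have already shown $C_S(x_i)\subseteq S'$.
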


\begin{proof}
Since every $A$-set $S$ is a $\wedge$-tree set (see \cite[Theorem 4.3]{MatHau2}), it follows that for each $x_i\in S$ the set $C_S(x_i)$ consists of at most one element (see \cite[Lemma 4.1]{MatHau2}). This implies that the element $x_i$ generates a double-chain set in $S$.

Suppose next that $x_j$ is a maximal element in $S$ and $S\setminus\{x_j\}$ is an $A$-set. Let $x_i\in S.$ If $x_i\neq x_j$, then $x_i$ covers at most one element and $x_i$ trivially generates a double-chain set in $S$. We only need to show that the element $x_j$ generates a double-chain set in $S$ as well. Clearly
\[
\mathrm{meetcl}(C_S(x_j))\setminus C_S(x_j):=A_j\subseteq A=\{x_k\wedge x_l\,\big|\,1\leq k<l\leq n\},
\]
where $A$ is a chain by the definition of an $A$-set. Now we may choose $B_i=\emptyset$ and thus $x_j$ generates a double-chain set in $S$. 
\end{proof}

\begin{example}\label{ex1}
Let us consider a couple of meet semilattices and determine whether their elements generate double-chain sets or not. In Figure \ref{fig:ex1} (a) we have a typical $A$-set in which, as we saw in the proof of Theorem \ref{th:A-set}, any element $x_i$ trivially generates a double-chain set. In Figure \ref{fig:ex1} (b) there is a semilattice that has been obtained from an $A$-set by adding a maximum element $x_i$. By Theorem \ref{th:A-set} also in this semilattice any element $x_i$ generates a double-chain set. Figure \ref{fig:ex1} (c) shows a typical case when both of the chains $A_i$ and $B_i$ need to be nonempty. In Figure \ref{fig:ex1} (d) there is a case where $x_i$ generates a slightly more complicated double-chain set. And finally, in Figure \ref{fig:ex1} (e) we have the cube semilattice, in which the top element clearly does not generate a double-chain set. Later it will be shown (see Theorem \ref{th:B-L-conjecture}) that this is in fact the minimal example of a semilattice in which one of the elements does not generate a double-chain set. 
\end{example}

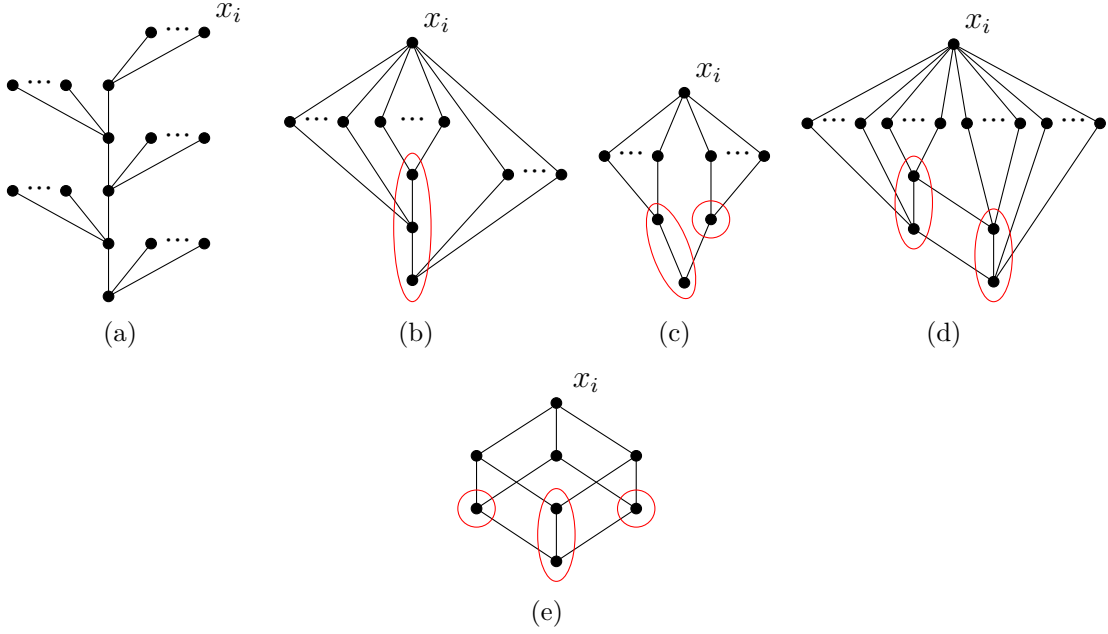
\begin{figure}[htb!]
\centering
\subfigure[\ ]
{
\begin{tikzpicture}[scale=0.7]
\draw (0,0)--(0,4);
\draw [fill] (0,0) circle [radius=0.1];
\draw [fill] (0,1) circle [radius=0.1];
\draw [fill] (0,2) circle [radius=0.1];
\draw [fill] (0,3) circle [radius=0.1];
\draw [fill] (0,4) circle [radius=0.1];
\draw [fill] (0.8,1) circle [radius=0.1];
\draw [fill] (1.8,1) circle [radius=0.1];
\draw [fill] (-0.8,2) circle [radius=0.1];
\draw [fill] (-1.8,2) circle [radius=0.1];
\draw [fill] (0.8,3) circle [radius=0.1];
\draw [fill] (1.8,3) circle [radius=0.1];
\draw [fill] (-0.8,4) circle [radius=0.1];
\draw [fill] (-1.8,4) circle [radius=0.1];
\draw [fill] (0.8,5) circle [radius=0.1];
\draw [fill] (1.8,5) circle [radius=0.1];
\draw (0,0)--(0.8,1);
\draw (0,0)--(1.8,1);
\draw (0,1)--(-0.8,2);
\draw (0,1)--(-1.8,2);
\draw (0,2)--(0.8,3);
\draw (0,2)--(1.8,3);
\draw (0,3)--(-0.8,4);
\draw (0,3)--(-1.8,4);
\draw (0,4)--(0.8,5);
\draw (0,4)--(1.8,5);
\node at (1.3,1) {$\cdots$};
\node at (-1.3,2) {$\cdots$};
\node at (1.3,3) {$\cdots$};
\node at (-1.3,4) {$\cdots$};
\node at (1.3,5) {$\cdots$};
\node [above right] at (1.8,5) {$x_i$};
\end{tikzpicture}
}
\subfigure[\ ]
{
\begin{tikzpicture}[scale=0.7]
\draw (0,2)--(0,4);
\draw [fill] (0,2) circle [radius=0.1];
\draw [fill] (0,3) circle [radius=0.1];
\draw [fill] (0,4) circle [radius=0.1];
\draw [fill] (1.8,4) circle [radius=0.1];
\draw [fill] (2.8,4) circle [radius=0.1];
\draw [fill] (-1.3,5) circle [radius=0.1];
\draw [fill] (-2.3,5) circle [radius=0.1];
\draw [fill] (-0.6,5) circle [radius=0.1];
\draw [fill] (0.6,5) circle [radius=0.1];
\draw [fill] (0,6.5) circle [radius=0.1];
\draw (0,2)--(1.8,4);
\draw (0,2)--(2.8,4);
\draw (0,3)--(-1.3,5);
\draw (0,3)--(-2.3,5);
\draw (0,4)--(0.6,5);
\draw (0,4)--(-0.6,5);
\draw (0,6.5)--(1.8,4);
\draw (0,6.5)--(2.8,4);
\draw (0,6.5)--(-1.3,5);
\draw (0,6.5)--(-2.3,5);
\draw (0,6.5)--(0.6,5);
\draw (0,6.5)--(-0.6,5);
\node at (2.3,4) {$\cdots$};
\node at (-1.8,5) {$\cdots$};
\node at (0,5) {$\cdots$};
\node [above right] at (0,6.5) {$x_i$};
\draw[red] (0,3) ellipse (10 pt and 40 pt);
\end{tikzpicture}
}
\subfigure[\ ]
{
\begin{tikzpicture}[scale=0.7]
\draw (0,2)--(-0.5,3.2)--(-0.5,4.4)--(0,5.6);
\draw (0,2)--(0.5,3.2)--(0.5,4.4)--(0,5.6);
\draw (-0.5,3.2)--(-1.5,4.4)--(0,5.6);
\draw (0.5,3.2)--(1.5,4.4)--(0,5.6);
\draw [fill] (0.5,4.4) circle [radius=0.1];
\draw [fill] (0,5.6) circle [radius=0.1];
\draw [fill] (0,2) circle [radius=0.1];
\draw [fill] (1.5,4.4) circle [radius=0.1];
\draw [fill] (-1.5,4.4) circle [radius=0.1];
\draw [fill] (-0.5,3.2) circle [radius=0.1];
\draw [fill] (0.5,3.2) circle [radius=0.1];
\draw [fill] (-0.5,4.4) circle [radius=0.1];
\node [above right] at (0,5.6) {$x_i$};
\node at (-1,4.4) {$\cdots$};
\node at (1,4.4) {$\cdots$};
\draw[rotate around={20:(-0.25,2.6)}, red] (-0.25,2.6) ellipse (10 pt and 27 pt);
\draw[red] (0.5,3.2) ellipse (10 pt and 10 pt);
\end{tikzpicture}
}
\subfigure[\ ]
{
\begin{tikzpicture}[scale=0.7]
\draw (3.5,0)--(3.5,1);
\draw (3.5,0)--(2,1);
\draw (2,1)--(2,2);
\draw (3.5,1)--(2,2);
\draw (2,1)--(0,3);
\draw (2,1)--(1,3);
\draw (2,2)--(1.5,3);
\draw (2,2)--(2.5,3);
\draw (3.5,1)--(3,3);
\draw (3.5,1)--(4,3);
\draw (3.5,0)--(4.5,3);
\draw (3.5,0)--(5.5,3);
\draw (0,3)--(2.75,4.5);
\draw (1,3)--(2.75,4.5);
\draw (1.5,3)--(2.75,4.5);
\draw (2.5,3)--(2.75,4.5);
\draw (3,3)--(2.75,4.5);
\draw (4,3)--(2.75,4.5);
\draw (4.5,3)--(2.75,4.5);
\draw (5.5,3)--(2.75,4.5);
\node [above right] at (2.75,4.5) {$x_i$};
\draw [fill] (3.5,0) circle [radius=0.1];
\draw [fill] (3.5,1) circle [radius=0.1];
\draw [fill] (2,1) circle [radius=0.1];
\draw [fill] (2,2) circle [radius=0.1];
\draw [fill] (0,3) circle [radius=0.1];
\draw [fill] (1,3) circle [radius=0.1];
\draw [fill] (1.5,3) circle [radius=0.1];
\draw [fill] (2.5,3) circle [radius=0.1];
\draw [fill] (3,3) circle [radius=0.1];
\draw [fill] (4,3) circle [radius=0.1];
\draw [fill] (4.5,3) circle [radius=0.1];
\draw [fill] (5.5,3) circle [radius=0.1];
\draw [fill] (2.75,4.5) circle [radius=0.1];
\node at (0.5,3) {$\cdots$};
\node at (2,3) {$\cdots$};
\node at (3.5,3) {$\cdots$};
\node at (5,3) {$\cdots$};
\draw[red] (2,1.5) ellipse (10 pt and 25 pt);
\draw[red] (3.5,0.5) ellipse (10 pt and 25 pt);
\end{tikzpicture}
}
\subfigure[\ ]
{
\begin{tikzpicture}[scale=0.7]
\draw (1,0)--(-0.5,1)--(-0.5,2)--(0.92,2.92);
\draw (1,0)--(1,1)--(-0.5,2);
\draw (-0.5,1)--(1,2)--(1,2.9);
\draw (1,0)--(2.5,1)--(2.5,2)--(1.08,2.92);
\draw (1,1)--(2.5,2);
\draw (2.5,1)--(1,2);
\draw [fill] (-0.5,1) circle [radius=0.1];
\draw [fill] (1,1) circle [radius=0.1];
\draw [fill] (2.5,1) circle [radius=0.1];
\draw [fill] (1,0) circle [radius=0.1];
\draw [fill] (-0.5,2) circle [radius=0.1];
\draw [fill] (1,2) circle [radius=0.1];
\draw [fill] (2.5,2) circle [radius=0.1];
\draw [fill] (1,3) circle [radius=0.1];
\node [above right] at (1.1,3) {$x_i$};
\draw[red] (-0.5,1) ellipse (10 pt and 10 pt);
\draw[red] (2.5,1) ellipse (10 pt and 10 pt);
\draw[red] (1,0.5) ellipse (10 pt and 25 pt);
\end{tikzpicture}
}
\caption{Meet semilattices related to Example \ref{ex1}. Suggestions how to choose the chains $A_i$ and $B_i$ are also marked in each figure (note that in the first case both of the chains are empty and in the last case there would have to be three chains).}\label{fig:ex1}
\end{figure}

The following theorem gives an alternative characterization for an element $x_i\in S$ to generate a double-chain set in $S$.

\begin{theorem}\label{th:width}
An element $x_i\in S$ generates a double-chain set in $S$ if and only if the width of the set $\mathrm{meetcl}(C_S(x_i))\setminus C_S(x_i)$ is less than or equal to $2$.
\end{theorem}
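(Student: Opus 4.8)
My plan is to recognize this statement as an essentially immediate consequence of Dilworth's theorem, so the whole proof reduces to a clean application of that result together with bookkeeping of the degenerate cases. I would begin by writing $T:=\mathrm{meetcl}(C_S(x_i))\setminus C_S(x_i)$ and regarding $T$ as a subposet of $(P,\preceq)$ equipped with the induced order (so that a subset of $T$ is a chain in $T$ exactly when it is a chain in $P$). The first thing I would verify is that $T$ is finite: since $S$ is meet closed and $C_S(x_i)\subseteq S$, every finite meet of elements of $C_S(x_i)$ again lies in $S$, so $\mathrm{meetcl}(C_S(x_i))\subseteq S$ and hence $T\subseteq S$ is finite. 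This finiteness is exactly what licenses the use of Dilworth's theorem, which states that the least number of chains needed to partition a finite poset equals its width, i.e. the maximal size of an antichain.

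For the forward implication I would assume that $x_i$ generates a double-chain set, so that $T=A\cup B$ with $A$ and $B$ disjoint chains in $P$. I would then take an arbitrary antichain $X\subseteq T$ and observe that $X$ meets each totally ordered chain in at most one element, so $|X|=|X\cap A|+|X\cap B|\leq 2$. Since this bounds every antichain of $T$, the width of $T$ is at most $2$.

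For the converse I would assume the width of $T$ is at most $2$ and invoke Dilworth's theorem to obtain a partition of $T$ into at most two chains. If the partition genuinely consists of two chains I am done at once; if $T$ is itself a chain (width $\leq 1$) I would set $A:=T$ and $B:=\emptyset$, and if $T=\emptyset$ I would take $A=B=\emptyset$. In each case $T$ is written as a union of two disjoint chains, so by definition $x_i$ generates a double-chain set in $S$.

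I do not expect any genuine obstacle here; the only two points needing explicit care are the finiteness of $T$ (so that Dilworth applies rather than some infinite-poset subtlety) and the convention, already recorded in the remark following the definition, that one or both of the chains $A,B$ may be empty — this convention is precisely what absorbs the boundary values of width $0$ and $1$ in the converse direction.
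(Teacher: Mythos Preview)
Your proof is correct. The forward direction is identical to the paper's, but for the converse you take a genuinely different route: you invoke Dilworth's theorem as a black box, whereas the paper gives an explicit bottom-up construction of the chains $A_i$ and $B_i$. Starting from $\bigwedge C_S(x_i)$, the paper inductively examines which elements of $T$ cover the current top elements of the two chains being built, and uses the meet-semilattice structure of $T$ together with the width bound to argue that at most two such covers can arise at each step, yielding a small case analysis.

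Your approach is shorter and more robust: it does not use that $T$ is a meet semilattice at all, only that $T$ is a finite poset. The paper's approach, on the other hand, is self-contained and constructive, avoiding any appeal to an external combinatorial theorem; since the particular chains produced by that construction are not referenced later (Lemmas~\ref{max1} and~\ref{tasan1} apply to any decomposition $T=A_i\cup B_i$), the constructive aspect is not essential for the sequel, and your Dilworth argument would serve equally well.
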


\begin{proof}
If the element $x_i$ generates a double-chain set in $S$, then it follows straight from the definition that the width of the set $\mathrm{meetcl}(C_S(x_i))\setminus C_S(x_i)$ is less than or equal to $2$. Suppose then that $x_i\in S$ and the width of the set $\mathrm{meetcl}(C_S(x_i))\setminus C_S(x_i)$ is less than or equal to $2$. If the width is equal to $0$, then $x_i$ covers exactly one element and we may take $A_i=B_i=\emptyset$. If the width is equal to $1$, then every two elements of $\mathrm{meetcl}(C_S(x_i))\setminus C_S(x_i)$ are comparable and thus this set is a chain. Therefore we may take $A_i=\mathrm{meetcl}(C_S(x_i))\setminus C_S(x_i)$ and $B_i=\emptyset$. For the last we assume that the width of $\mathrm{meetcl}(C_S(x_i))\setminus C_S(x_i)$ is equal to $2$. We construct the chains $A_i$ and $B_i$ from bottom to up. We begin by setting $\bigwedge C_S(x_i)\in A_i$. This element needs to be covered by exactly two elements $a_1$ and $b_1$ (otherwise it could not be the greatest lower bound for the set $C_S(x_i)$). We set $a_1\in A_i$ and $b_1\in B_i$. Next we take into consideration all the elements of $\mathrm{meetcl}(C_S(x_i))\setminus C_S(x_i)$ that cover either $a_1$ or $b_1$. There can be at most two such elements, which means that the following cases are possible:
\begin{enumerate}
\item In $\mathrm{meetcl}(C_S(x_i))\setminus C_S(x_i)$ one element covers $a_1$ and no element covers $b_1$ (Figure \ref{fig:cases} (a)). This new element is added to the chain $A_i$ and the process continues with this chain, no more elements are added to the chain $B_i$.
\item In $\mathrm{meetcl}(C_S(x_i))\setminus C_S(x_i)$ one element covers $b_1$ and no element covers $a_1$ (Figure \ref{fig:cases} (b)). This new element is added to the chain $B_i$ and the process continues with this chain, no more elements are added to the chain $A_i$.
\item In $\mathrm{meetcl}(C_S(x_i))\setminus C_S(x_i)$ one element covers $a_1$ and one covers $b_1$ (Figure \ref{fig:cases} (c)). These new elements are added to chains $A_i$ and $B_i$ respectively. The process continues with both chains.
\item In $\mathrm{meetcl}(C_S(x_i))\setminus C_S(x_i)$ one element covers either $a_1$ or $b_1$, another element covers both of them (Figure \ref{fig:cases} (d) and (e)). The first mentioned element is added to the chain $A_i$ if it covers the element $a_1$, otherwise it is added to the chain $B_i$. The other element is added to the remaining chain. The process continues with both chains.
\end{enumerate}
There are no other possibilities, since the set $\mathrm{meetcl}(C_S(x_i))\setminus C_S(x_i)$ is a meet semilattice (a meet semilattice continues to be a meet semilattice although some of the maximal elements are removed) and because it cannot contain an antichain with tree elements. Since the set $\mathrm{meetcl}(C_S(x_i))\setminus C_S(x_i)$ is also finite, repeating the above steps eventually leads to two disjoint chains $A_i$ and $B_i$, which together contain all the elements of the set $\mathrm{meetcl}(C_S(x_i))\setminus C_S(x_i)$.
\end{proof}

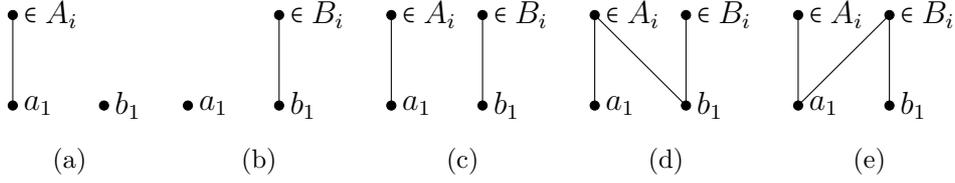
\begin{figure}[htb!]
\centering
\subfigure[\ ]
{
\begin{tikzpicture}[scale=0.6]
\draw (0,0)--(0,2);
\draw [fill] (0,0) circle [radius=0.1];
\draw [fill] (0,2) circle [radius=0.1];
\draw [fill] (2,0) circle [radius=0.1];
\node [right] at (0,0) {$a_1$};
\node [right] at (2,0) {$b_1$};
\node [right] at (0,2) {$\in A_i$};
\end{tikzpicture}
}
\subfigure[\ ]
{
\begin{tikzpicture}[scale=0.6]
\draw (2,0)--(2,2);
\draw [fill] (0,0) circle [radius=0.1];
\draw [fill] (2,2) circle [radius=0.1];
\draw [fill] (2,0) circle [radius=0.1];
\node [right] at (0,0) {$a_1$};
\node [right] at (2,0) {$b_1$};
\node [right] at (2,2) {$\in B_i$};
\end{tikzpicture}
}
\subfigure[\ ]
{
\begin{tikzpicture}[scale=0.6]
\draw (2,0)--(2,2);
\draw (0,0)--(0,2);
\draw [fill] (0,0) circle [radius=0.1];
\draw [fill] (0,2) circle [radius=0.1];
\draw [fill] (2,2) circle [radius=0.1];
\draw [fill] (2,0) circle [radius=0.1];
\node [right] at (0,0) {$a_1$};
\node [right] at (2,0) {$b_1$};
\node [right] at (0,2) {$\in A_i$};
\node [right] at (2,2) {$\in B_i$};
\end{tikzpicture}
}
\subfigure[\ ]
{
\begin{tikzpicture}[scale=0.6]
\draw (2,0)--(2,2);
\draw (0,0)--(0,2);
\draw (0,2)--(2,0);
\draw [fill] (0,0) circle [radius=0.1];
\draw [fill] (0,2) circle [radius=0.1];
\draw [fill] (2,2) circle [radius=0.1];
\draw [fill] (2,0) circle [radius=0.1];
\node [right] at (0,0) {$a_1$};
\node [right] at (2,0) {$b_1$};
\node [right] at (0,2) {$\in A_i$};
\node [right] at (2,2) {$\in B_i$};
\end{tikzpicture}
}
\subfigure[\ ]
{
\begin{tikzpicture}[scale=0.6]
\draw (2,0)--(2,2);
\draw (0,0)--(0,2);
\draw (2,2)--(0,0);
\draw [fill] (0,0) circle [radius=0.1];
\draw [fill] (0,2) circle [radius=0.1];
\draw [fill] (2,2) circle [radius=0.1];
\draw [fill] (2,0) circle [radius=0.1];
\node [right] at (0,0) {$a_1$};
\node [right] at (2,0) {$b_1$};
\node [right] at (0,2) {$\in A_i$};
\node [right] at (2,2) {$\in B_i$};
\end{tikzpicture}
}
\caption{Illustrations of different cases in the proof of Theorem \ref{th:width}.}\label{fig:cases}
\end{figure}

Next we need to develop some further terminology. Suppose that $x_i$ generates a double-chain set in $S$, and that $x_i$ covers at least two elements in $S$. Now let $A_i$ and $B_i$ be two disjoint chains such that $\mathrm{meetcl}(C_S(x_i))\setminus C_S(x_i)=A_i\cup B_i$. For each element $z\in C_S(x_i)$ there exists an element $a\in A_i$ or $b\in B_i$ such that $a\lessdot z$ or $b\lessdot z$ in $\mathrm{meetcl}(C_S(x_i))$. In the first case we say that $z$ \emph{attaches to the chain $A_i$} and in the second case it attaches to the chain $B_i$. Sometimes it is even possible that some element is attached to both chains. However, the next lemma shows that there can be only one such element.

\begin{lemma}\label{max1}
If $x_i$ generates a double-chain set in $S$, where $A_i$ and $B_i$ are the corresponding two chains, then there can be at most one element $z\in C_S(x_i)$ that attaches to both chains $A_i$ and $B_i$.
\end{lemma}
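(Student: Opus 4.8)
My plan is to argue by contradiction, exploiting the fact that $A_i$ and $B_i$ are chains together with the meet-closedness of $S$. First I would record two structural facts about $M:=\mathrm{meetcl}(C_S(x_i))$. Since any two elements covered by $x_i$ are incomparable, $C_S(x_i)$ is an antichain, and because every element of $M$ is a meet of generators (hence lies below some generator), the set $C_S(x_i)$ is precisely the set of maximal elements of $M$. Consequently every non-maximal element of $M$, and in particular the meet $z_1\wedge z_2$ of any two distinct generators, lies in $M\setminus C_S(x_i)=A_i\cup B_i$.

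Now suppose, for contradiction, that two distinct elements $z_1,z_2\in C_S(x_i)$ each attach to both chains. By the definition of attaching I may pick $a_1,a_2\in A_i$ and $b_1,b_2\in B_i$ with $a_j\lessdot z_j$ and $b_j\lessdot z_j$ in $M$ for $j=1,2$. Set $w:=z_1\wedge z_2$. Since $z_1,z_2$ are distinct maximal (hence incomparable) elements, $w\prec z_1$ and $w\prec z_2$ strictly, and by the first observation $w\in A_i\cup B_i$. By the symmetry between the two chains I may assume $w\in A_i$.

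The crux is then a short covering argument. Because $w$ and $a_j$ both lie in the chain $A_i$ and $w\prec z_j$ while $a_j\lessdot z_j$, comparability forces $w\preceq a_j$ (otherwise $a_j\prec w\prec z_j$ would contradict $a_j\lessdot z_j$). Next, since $b_1,b_2$ lie in the chain $B_i$ they are comparable; letting $b:=b_j$ be the smaller of the two, one checks $b\prec z_1$ and $b\prec z_2$ (the smaller $b_j$ is below the larger, which is below its own $z$), whence $b\preceq w$, and $b\neq w$ because $b\in B_i$ while $w\in A_i$. Combining, $b\prec w\preceq a_j\prec z_j$, so $a_j$ sits strictly between $b$ and $z_j$, contradicting $b=b_j\lessdot z_j$. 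The case $w\in B_i$ is identical with the roles of $A_i$ and $B_i$ interchanged, so a contradiction arises in every case and at most one generator can attach to both chains.

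I expect the only delicate points to be bookkeeping rather than any deep idea: making sure $z_1\wedge z_2$ genuinely lands in $A_i\cup B_i$ (which is exactly why I prove the maximality/antichain observation first) and handling the two choices — which chain contains $w$, and which of $b_1,b_2$ is smaller — without circularity. Both are resolved by the symmetry between $A_i$ and $B_i$ and between the indices $1$ and $2$, so no real obstacle remains once the covering contradiction above is established.
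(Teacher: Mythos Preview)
Your argument is correct and follows essentially the same strategy as the paper: assume two elements $z_1,z_2\in C_S(x_i)$ attach to both chains, locate their meet inside $A_i\cup B_i$, and derive a contradiction with one of the covering relations $b_j\lessdot z_j$. The paper carries this out via a four-case analysis on which of $a_1,a_2$ and $b_1,b_2$ is smaller, whereas you streamline the same idea by placing $w=z_1\wedge z_2$ in one chain (WLOG $A_i$) and taking the minimum of $b_1,b_2$; this collapses the four cases into one and is a perfectly valid, slightly cleaner packaging of the identical argument.
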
 

\begin{proof}
Suppose for a contradiction that there exist two distinct elements $z,z'\in C_S(x_i)$ such that $a\lessdot z$, $b\lessdot z$, $a'\lessdot z'$ and $b'\lessdot z'$, where $a,a'\in A_i$ and $b,b'\in B_i$. This means that $\min(a,a')$ and $\min(b,b')$ are both common lower bounds for $z$ and $z'$. No greater common lower bound can be found from either chain, since otherwise either $z$ or $z'$ would cover the element $z\wedge z'$ instead of $\min(a,a')$ or $\min(b,b')$. Therefore either of the elements $\min(a,a')$ or $\min(b,b')$ needs to be equal to $z\wedge z'$, and thus the elements $\min(a,a')$ and $\min(b,b')$ must be comparable. The following possibilities come into question:
\begin{enumerate}
\item If $a=\min(a,a')$ and $b=\min(b,b')$ (see Figure \ref{fig:cases2} (a)), then either $a\prec b$ or $b\prec a$. This is impossible, since $z$ was supposed to cover both of these elements.
\item If $a'=\min(a,a')$ and $b'=\min(b,b')$ (see Figure \ref{fig:cases2} (b)), then either $a'\prec b'$ or $b'\prec a'$. This is impossible, since $z'$ was supposed to cover both of these elements.
\item If $a=\min(a,a')$ and $b'=\min(b,b')$ (see Figure \ref{fig:cases2} (c)), then either $a\prec b'$ or $b'\prec a$. In the first case $z$ covers comparable elements $a$ and $b$, in the second case $z'$ covers comparable elements $a'$ and $b'$. In both cases we have a contradiction.
\item If $a'=\min(a,a')$ and $b=\min(b,b')$ (see Figure \ref{fig:cases2} (d)), then either $a'\prec b$ or $b\prec a'$. As in the part 3 we have a contradiction, since either $z$ or $z'$ now covers two comparable elements.
\end{enumerate}
Each of the cases 1--4 yields a contradiction, and thus we have proven the claim.
\end{proof}

\begin{figure}[htb!]
\centering
\subfigure[\ ]
{
\begin{tikzpicture}[scale=0.9]
\draw (0.5,0)--(1,2);
\draw (0.5,1)--(2,2);
\draw (2.5,1)--(2,2);
\draw (2.5,0)--(1,2);
\draw (2.5,0)--(2.5,1);
\draw (0.5,0)--(0.5,1);
\draw[dotted] (0.5,0)--(2.5,0);
\draw [fill] (0.5,0) circle [radius=0.1];
\draw [fill] (0.5,1) circle [radius=0.1];
\draw [fill] (1,2) circle [radius=0.1];
\draw [fill] (2,2) circle [radius=0.1];
\draw [fill] (2.5,1) circle [radius=0.1];
\draw [fill] (2.5,0) circle [radius=0.1];
\node [right] at (1,2) {$z$};
\node [right] at (2,2) {$z'$};
\node [left] at (0.5,0) {$a$};
\node [left] at (0.5,1) {$a'$};
\node [right] at (2.5,0) {$b$};
\node [right] at (2.5,1) {$b'$};
\end{tikzpicture}
}
\subfigure[\ ]
{
\begin{tikzpicture}[scale=0.9]
\draw (0.5,0)--(2,2);
\draw (0.5,1)--(1,2);
\draw (2.5,1)--(1,2);
\draw (2.5,0)--(2,2);
\draw (2.5,0)--(2.5,1);
\draw (0.5,0)--(0.5,1);
\draw[dotted] (0.5,0)--(2.5,0);
\draw [fill] (0.5,0) circle [radius=0.1];
\draw [fill] (0.5,1) circle [radius=0.1];
\draw [fill] (1,2) circle [radius=0.1];
\draw [fill] (2,2) circle [radius=0.1];
\draw [fill] (2.5,1) circle [radius=0.1];
\draw [fill] (2.5,0) circle [radius=0.1];
\node [right] at (1,2) {$z$};
\node [right] at (2,2) {$z'$};
\node [left] at (0.5,0) {$a'$};
\node [left] at (0.5,1) {$a$};
\node [right] at (2.5,0) {$b'$};
\node [right] at (2.5,1) {$b$};
\end{tikzpicture}
}
\subfigure[\ ]
{
\begin{tikzpicture}[scale=0.9]
\draw (0.5,0)--(1,2);
\draw (0.5,1)--(2,2);
\draw (2.5,1)--(1,2);
\draw (2.5,0)--(2,2);
\draw (2.5,0)--(2.5,1);
\draw (0.5,0)--(0.5,1);
\draw[dotted] (0.5,0)--(2.5,0);
\draw [fill] (0.5,0) circle [radius=0.1];
\draw [fill] (0.5,1) circle [radius=0.1];
\draw [fill] (1,2) circle [radius=0.1];
\draw [fill] (2,2) circle [radius=0.1];
\draw [fill] (2.5,1) circle [radius=0.1];
\draw [fill] (2.5,0) circle [radius=0.1];
\node [right] at (1,2) {$z$};
\node [right] at (2,2) {$z'$};
\node [left] at (0.5,0) {$a$};
\node [left] at (0.5,1) {$a'$};
\node [right] at (2.5,0) {$b'$};
\node [right] at (2.5,1) {$b$};
\end{tikzpicture}
}
\subfigure[\ ]
{
\begin{tikzpicture}[scale=0.9]
\draw (0.5,0)--(2,2);
\draw (0.5,1)--(1,2);
\draw (2.5,1)--(2,2);
\draw (2.5,0)--(1,2);
\draw (2.5,0)--(2.5,1);
\draw (0.5,0)--(0.5,1);
\draw[dotted] (0.5,0)--(2.5,0);
\draw [fill] (0.5,0) circle [radius=0.1];
\draw [fill] (0.5,1) circle [radius=0.1];
\draw [fill] (1,2) circle [radius=0.1];
\draw [fill] (2,2) circle [radius=0.1];
\draw [fill] (2.5,1) circle [radius=0.1];
\draw [fill] (2.5,0) circle [radius=0.1];
\node [right] at (1,2) {$z$};
\node [right] at (2,2) {$z'$};
\node [left] at (0.5,0) {$a'$};
\node [left] at (0.5,1) {$a$};
\node [right] at (2.5,0) {$b$};
\node [right] at (2.5,1) {$b'$};
\end{tikzpicture}
}
\caption{Illustrations of different cases in the proof of Theorem \ref{max1}.}\label{fig:cases2}
\end{figure}
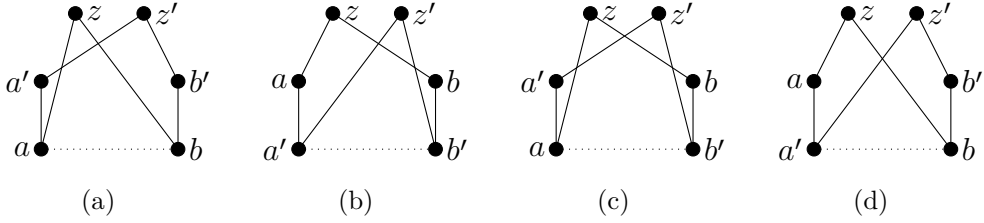

Next we take a closer look at the case when there exists an element $z\in C_S(x_i)$ attaching to both of the chains. The existence of such element has certain implications on the elements of $A_i$ and $B_i$, more precisely on their ability to precede one another. The following lemma explains this. 

\begin{lemma}\label{tasan1}
Suppose that $x_i$ generates a double-chain set in $S$, where $A_i$ and $B_i$ are the corresponding two chains, and that there exists (exactly one) element $z\in C_S(x_i)$ attacheing to both of the chains $A_i$ and $B_i$. Moreover, let $a\in A_i$ and $b\in B_i$ be the elements satisfying $a\lessdot x_i$ and $b\lessdot x_i$. If $a'\prec b'$ or $b'\prec a'$ for some $a'\in A_i$ and $b'\in B_i$, then we must have $a'\preceq a$ and $b'\preceq b$, where at least one of these relations is strict.
\end{lemma}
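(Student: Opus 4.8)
The plan is to read $a$ and $b$ as the greatest (top) elements of the chains $A_i$ and $B_i$ — the elements of each chain lying closest to $x_i$ — since this is the reading under which the statement is correct. With this convention the two relations $a'\preceq a$ and $b'\preceq b$ are automatic: $A_i$ and $B_i$ are chains with greatest elements $a$ and $b$, and $a'\in A_i$, $b'\in B_i$. Hence the entire content of the lemma is the clause ``at least one of these relations is strict''. Indeed, if both were equalities then $a'=a$ and $b'=b$, and the hypothesis $a'\prec b'$ or $b'\prec a'$ would say exactly that the two top elements $a$ and $b$ are comparable. So I would reduce everything to proving that \emph{the top elements of the two chains are incomparable as soon as a both-attaching element $z$ exists.} Write $M:=\mathrm{meetcl}(C_S(x_i))$ and let $a_0\in A_i$, $b_0\in B_i$ be the two (distinct) elements that $z$ covers, which exist because $z$ attaches to both chains; note $a_0\preceq a$ and $b_0\preceq b$.

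The decisive step, which I expect to be the main obstacle, is the claim that $z$ is the \emph{unique} common upper bound of $a_0$ and $b_0$ in $M$. To prove it I would first recall that the maximal elements of $M$ are precisely the elements of $C_S(x_i)$, so $z$ is maximal in $M$. Now take any $u\in M$ with $u\succeq a_0$ and $u\succeq b_0$ and form the meet $z\wedge u$. Since $a_0,b_0\preceq z$ and $a_0,b_0\preceq u$, we get $a_0\preceq z\wedge u$ and $b_0\preceq z\wedge u$. If $u\neq z$ then $z\not\preceq u$ (as $z$ is maximal), whence $z\wedge u\prec z$ strictly; but $z$ covers both $a_0$ and $b_0$, so from $a_0\preceq z\wedge u\prec z$ and $b_0\preceq z\wedge u\prec z$ the covering relations force $z\wedge u=a_0$ and $z\wedge u=b_0$ simultaneously, contradicting $a_0\neq b_0$. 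Thus $u=z$, proving the claim; in other words $z=a_0\vee b_0$ and, being maximal in $M$, it is the only upper bound of this pair inside $M$.

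Finally I would obtain the incomparability of the tops. Suppose for contradiction that $a$ and $b$ are comparable; by the symmetry of the two chains assume $b\prec a$. Then $b_0\preceq b\prec a$ yields $a\succeq b_0$, while $a\succeq a_0$ holds because $a$ is the top of the chain $A_i$ that contains $a_0$. Hence $a$ is a common upper bound of $a_0$ and $b_0$, so by the claim $a=z$ — impossible, since $a\in A_i\subseteq M\setminus C_S(x_i)$ is not maximal in $M$ while $z$ is. The case $a\prec b$ is identical with the two chains interchanged. Therefore $a$ and $b$ are incomparable, which is exactly the asserted ``at least one strict'' and finishes the argument.
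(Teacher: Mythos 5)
Your reading of $a$ and $b$ is where this goes astray. The ``$a\lessdot x_i$'' in the statement is a typo for ``$a\lessdot z$'': $a$ and $b$ are the elements of $A_i$ and $B_i$ that the both-attaching element $z$ \emph{covers} in $\mathrm{meetcl}(C_S(x_i))$, not the tops of the chains. (The paper's own proof writes ``because $a\lessdot z$ and $b\lessdot z$'' and carefully distinguishes $b$ from $x_b$, the top of $B_i$; moreover, under your reading the conclusion ``$a'\preceq a$ and $b'\preceq b$'' is vacuous, whereas the paper later needs it at full strength --- in the proof of Theorem \ref{th:mobius} it is precisely this containment that yields $x_a\wedge x_b=x_q\wedge x_r$.) Consequently what you prove --- that the two chain tops are incomparable --- is a correct but strictly weaker consequence of the lemma. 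It does not give, for an \emph{arbitrary} comparable cross-chain pair $a'\prec b'$, the localization $a'\preceq a$ and $b'\preceq b$ relative to $z$'s attachment points, and that localization is the actual content of the statement.

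That said, your central claim is exactly the right engine, and it is the same one the paper uses: if $z$ covers $a_0\in A_i$ and $b_0\in B_i$ in $M=\mathrm{meetcl}(C_S(x_i))$, then $z$ is the unique common upper bound of $a_0$ and $b_0$ in $M$; your argument via $z\wedge u$ and the two covering relations is sound, since $M$ is meet closed and the elements of $C_S(x_i)$ are precisely the maximal elements of $M$. The paper deploys this same mechanism with $u=y$, where $y\in C_S(x_i)$ is an element attached to the top $x_b$ of $B_i$: assuming $a\prec a'$ gives $a\prec a'\prec b'\preceq x_b\prec y$, so $y$ is a common upper bound of $a$ and $b$ distinct from $z$, forcing $y\wedge z$ to equal both $a$ and $b$ --- a contradiction that yields $a'\preceq a$. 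Establishing $b'\preceq b$ then requires a further, separate analysis (assume $b\prec b'$ and examine where $y\wedge z$ can lie). To repair your proof you would need to add these two applications of your uniqueness claim; applying it only to the chain tops does not suffice.
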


\begin{proof}
We may assume that $a'\lessdot b'$ in $\mathrm{meetcl}(C_S(x_i))$ for some $a'\in A_i$ and $b'\in B_i$. Let $y\in C_S(x_i)$ be any element attaching to $x_b$, the top element of $B_i$ (clearly $b\preceq x_b$). We may assume that $y\neq z$ since there have to be at least two elements in $C_S(x_i)$ that are attached to $x_b$.

Let us show that $a'\preceq a$. If $a\prec a'$, then $a$ and $b$ are both common lower bounds for $y$ and $z$, and because $a\lessdot z$ and $b\lessdot z$, no greater common lower bound can be found from either chain. Since $y\wedge z$ is in either of the chains, $y\wedge z$ is equal to $a$ or $b$ and thus the elements $a$ and $b$ must be comparable (see Figure \ref{fig:cases3} (a) and (b)). This would be a contradiction, since $z$ cannot be attached to two comparable elements. We may thus deduce that $a'\preceq a$.

Next we are going to show that $b'\preceq b$. Suppose for a contradiction that $b\prec b'$ (the situation is illustrated in Figure \ref{fig:cases3} (c)). In this case $a'$ and $b$ are common lower bounds for $z$ and $y$. No greater common lower bound can be found from the chain $B_i$, since otherwise $b\lessdot z$ would not hold. On the other hand, if we had $y\wedge z\in A_i$ with $a'\prec y\wedge z$, then it would imply that $b\prec z\wedge y\preceq a$. This would lead to a contradiction, since $z$ would now cover two comparable elements $a$ and $b$. We may deduce that either of the elements $a'$ and $b$ is equal to $y\wedge z$ and thus we must have $b\prec a'$ or $a'\prec b$. The first case is impossible since, again, $z$ cannot cover two comparable elements $a$ and $b$. Also the other case yields a contradiction since in this case we have $a'\prec b\prec b'$, which means that $a'\not\lessdot b'$. The last part of the claim follows immediately, since the elements $a$ and $b$ attached to $z$ need to be incomparable. 
\end{proof}

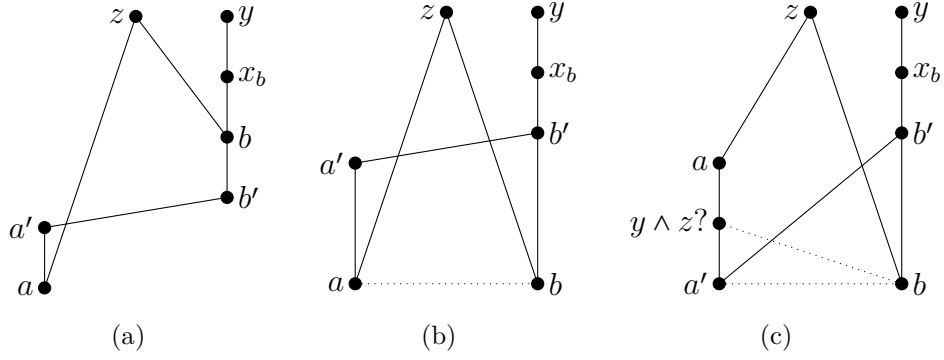
\begin{figure}[htb!]
\centering
\subfigure[\ ]
{
\begin{tikzpicture}[scale=0.8]
\draw (0,0.5)--(0,1.5);
\draw (3,2)--(0,1.5);
\draw (3,2)--(3,4);
\draw (3,4)--(3,5);
\draw (3,3)--(1.5,5);
\draw (0,0.5)--(1.5,5);
\draw [fill] (0,0.5) circle [radius=0.1];
\draw [fill] (0,1.5) circle [radius=0.1];
\draw [fill] (3,2) circle [radius=0.1];
\draw [fill] (3,3) circle [radius=0.1];
\draw [fill] (3,4) circle [radius=0.1];
\draw [fill] (3,5) circle [radius=0.1];
\draw [fill] (1.5,5) circle [radius=0.1];
\node [left] at (0,0.5) {$a$};
\node [left] at (0,1.5) {$a'$};
\node [right] at (3,2) {$b'$};
\node [right] at (3,3) {$b$};
\node [right] at (3,4) {$x_b$};
\node [right] at (3,5) {$y$};
\node [left] at (1.5,5) {$z$};
\end{tikzpicture}
}
\subfigure[\ ]
{
\begin{tikzpicture}[scale=0.8]
\draw (0,0.5)--(0,2.5);
\draw (3,3)--(0,2.5);
\draw (3,0.5)--(3,4);
\draw (3,4)--(3,5);
\draw (3,0.5)--(1.5,5);
\draw (0,0.5)--(1.5,5);
\draw[dotted] (0,0.5)--(3,0.5);
\draw [fill] (0,0.5) circle [radius=0.1];
\draw [fill] (0,2.5) circle [radius=0.1];
\draw [fill] (3,0.5) circle [radius=0.1];
\draw [fill] (3,3) circle [radius=0.1];
\draw [fill] (3,4) circle [radius=0.1];
\draw [fill] (3,5) circle [radius=0.1];
\draw [fill] (1.5,5) circle [radius=0.1];
\node [left] at (0,0.5) {$a$};
\node [left] at (0,2.5) {$a'$};
\node [right] at (3,0.5) {$b$};
\node [right] at (3,3) {$b'$};
\node [right] at (3,4) {$x_b$};
\node [right] at (3,5) {$y$};
\node [left] at (1.5,5) {$z$};
\end{tikzpicture}
}
\subfigure[\ ]
{
\begin{tikzpicture}[scale=0.8]
\draw (0,0.5)--(0,2.5);
\draw (3,3)--(0,0.5);
\draw (3,0.5)--(3,4);
\draw (3,4)--(3,5);
\draw (3,0.5)--(1.5,5);
\draw (0,2.5)--(1.5,5);
\draw[dotted] (0,0.5)--(3,0.5);
\draw[dotted] (3,0.5)--(0,1.5);
\draw [fill] (0,0.5) circle [radius=0.1];
\draw [fill] (0,2.5) circle [radius=0.1];
\draw [fill] (3,0.5) circle [radius=0.1];
\draw [fill] (3,3) circle [radius=0.1];
\draw [fill] (3,4) circle [radius=0.1];
\draw [fill] (3,5) circle [radius=0.1];
\draw [fill] (1.5,5) circle [radius=0.1];
\draw [fill] (0,1.5) circle [radius=0.1];
\node [left] at (0,0.5) {$a'$};
\node [left] at (0,2.5) {$a$};
\node [right] at (3,0.5) {$b$};
\node [right] at (3,3) {$b'$};
\node [right] at (3,4) {$x_b$};
\node [right] at (3,5) {$y$};
\node [left] at (1.5,5) {$z$};
\node [left] at (0,1.5) {$y\wedge z$?};
\end{tikzpicture}
}
\caption{Illustrations of different cases in the proof of Theorem \ref{tasan1}.}\label{fig:cases3}
\end{figure}

\section{Poset-theoretic Möbius function on meet closed double-chain sets}\label{sect:Möbius}

From now on we assume that the set $S\subseteq P$ is meet closed (in other words, $(S,\preceq)$ is a meet semilattice). The poset-theoretic Möbius function of the structure $(S,\preceq)$ is often defined as the inverse of the incidence function $\zeta$ with respect to the convolution operation. However, if we wish to calculate the value $\mu_S(x_j,x_i)$, the easiest way would probably be to use the following recursive formula, which follows directly from the definition:
\begin{align*}
&\mu_S(x_i,x_i)=1,\\
&\mu_S(x_j,x_i)=-\sum_{x_j\prec x_k\preceq x_i}\mu_S(x_k,x_i)=-\sum_{x_j\preceq x_k\prec x_i}\mu_S(x_j,x_k).
\end{align*}
If $x_i$ generates a double-chain set in, then calculating values $\mu_S(x_j,x_i)$ for different elements $x_j$ becomes rather simple. The following theorem shows this.

\begin{theorem}\label{th:mobius}
Suppose that $S$ is meet closed and $x_i$ generates a double-chain set in $S$. Denote $\mathrm{meetcl}(C_S(x_i))\setminus C_S(x_i)=A_i\cup B_i\subset S$, where $A_i$ and $B_i$ are chains with $A_i\cap B_i=\emptyset$. Let $x_a$ and $x_b$ denote the top elements of $A_i$ and $B_i$, respectively (in the case when one of the chains is empty we agree that $x_a=x_b$). For each $x_k\in A_i$ or $x_k\in B_i$, let $\eta(x_k)$ denote the number of elements $z\in C_S(x_i)$ attached to $x_k$. If no element is attached to both of the chains, then

\begin{numcases}{\mu_S(x_j,x_i)=}
1 & if $x_j=x_i$,\label{1a}\\
-1 & if  $x_j\in C_S(x_i)$,\label{1b}\\
\eta(x_j)-1 & if $x_j\in\{x_a,x_b\}$ is maximal in $A_i\cup B_i$,\label{1c}\\ \nonumber
\eta(x_j) & if $x_j\in \{x_a,x_b\}$ is not maximal\\
\ & \qquad in $A_i\cup B_i$\label{1d}\\\nonumber
\eta(x_j) & if $x_j\in A_i\cup B_i$ is not maximal\\ 
\ & \qquad in $A_i\cup B_i$ and $x_j\neq x_a\wedge x_b$,\label{1e}\\\nonumber
\eta(x_j)+1 & if $x_a$ and $x_b$ are incomparable\\
\ & \qquad and $x_j= x_a\wedge x_b\in A_i\cup B_i$,\label{1f}\\
0 & otherwise\label{1g}.
\end{numcases}

If (exactly) one element $x_p\in C_S(x_i)$ is attached to some element $x_q\in A_i$ and to some element $x_r\in B_i$, then

\begin{numcases}{\mu_S(x_j,x_i)=}
1 & if $x_j=x_i$,\label{2a}\\
-1 & if $x_j\in C_S(x_i)$,\label{2b}\\
\eta(x_j)-1 & if $x_j=x_a$ or $x_j=x_b$,\label{2c}\\
\eta(x_j) & if $x_j\in A_i\cup B_i$, $x_j\neq x_a$ and $x_j\neq x_b$,\label{2d}\\
0 & otherwise\label{2e}.
\end{numcases}

\end{theorem}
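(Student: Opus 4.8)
The plan is to compute every value $\mu_S(x_j,x_i)$ straight from the recursive definition, after first cutting the defining sum down to the elements of $\mathrm{meetcl}(C_S(x_i))$. I begin with the trivial observations that $\mu_S(x_i,x_i)=1$ and that each $x_j\in C_S(x_i)$ is covered by $x_i$, so $[x_j,x_i]=\{x_j,x_i\}$ and $\mu_S(x_j,x_i)=-1$; this settles \eqref{1a}, \eqref{1b}, \eqref{2a} and \eqref{2b}. For the ``otherwise'' cases \eqref{1g} and \eqref{2e} I argue as follows. If $x_j\not\preceq x_i$ the value is $0$ by convention, so assume $x_j\prec x_i$ with $x_j\notin\mathrm{meetcl}(C_S(x_i))$. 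Since $S$ is a locally finite meet semilattice with top element $x_i$ inside the interval, $[x_j,x_i]$ is a finite lattice whose coatoms are exactly the elements of $C_S(x_i)$ lying above $x_j$. Their meet $m$ lies in $\mathrm{meetcl}(C_S(x_i))$ and satisfies $m\succeq x_j$, and because $x_j\notin\mathrm{meetcl}(C_S(x_i))$ we must have $m\succ x_j$; thus $x_j$ is not the meet of the coatoms of $[x_j,x_i]$, and the standard corollary of Weisner's theorem forces $\mu_S(x_j,x_i)=0$.

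These two reductions turn the recursion, for $x_j\in A_i\cup B_i$, into a single clean identity. Splitting $\sum_{x_j\prec x_k\preceq x_i}\mu_S(x_k,x_i)$ according to whether $x_k=x_i$, $x_k\in C_S(x_i)$, $x_k\in A_i\cup B_i$, or $x_k\notin\mathrm{meetcl}(C_S(x_i))$, and inserting the values $1$, $-1$, and $0$ just obtained, the recursion collapses to $\sum_{x_k\in A_i\cup B_i,\ x_k\succeq x_j}\mu_S(x_k,x_i)=N(x_j)-1$, where $N(x_j)$ is the number of coatoms above $x_j$. This is a triangular system with a unique solution, so it suffices to verify that the proposed values satisfy it for every $x_j\in A_i\cup B_i$. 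Next I express $N(x_j)$ through the numbers $\eta$. Since in the first case each coatom $z$ covers a unique chain element $x_{k(z)}$, one has $z\succ x_j$ iff $x_{k(z)}\succeq x_j$, whence $N(x_j)=\sum_{x_k\succeq x_j}\eta(x_k)$. In the second case the single coatom $x_p$ covers the two incomparable chain elements $x_q,x_r$ and is counted in both $\eta(x_q)$ and $\eta(x_r)$, and a one-line inclusion–exclusion gives $N(x_j)=\sum_{x_k\succeq x_j}\eta(x_k)-c(x_j)$, where $c(x_j)=1$ precisely when $x_j\preceq x_q\wedge x_r$.

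It then remains to check the identity. Writing $g(x_k)=\mu_S(x_k,x_i)-\eta(x_k)$, the required statement becomes $\sum_{x_k\succeq x_j}g(x_k)=-1-c(x_j)$ (with $c\equiv0$ in the first case). In the first case the proposed values give $g=-1$ at the maximal elements of $A_i\cup B_i$, $g=+1$ at $x_a\wedge x_b$ when $x_a,x_b$ are incomparable, and $g=0$ otherwise; using $\mathbf 1[x_j\preceq x_a\wedge x_b]=\mathbf 1[x_j\preceq x_a]\cdot\mathbf 1[x_j\preceq x_b]$, the sum $\sum_{x_k\succeq x_j}g(x_k)$ reduces to $-\mathbf 1[x_j\preceq x_a\text{ or }x_j\preceq x_b]$, which equals $-1$ because every element of $A_i\cup B_i$ lies below at least one chain top. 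When $x_a,x_b$ are comparable (in particular when one chain is empty) only the larger top is maximal and the $+1$ term disappears, and the same computation again yields $-1$.

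For the second case the proposed values give $g=-1$ at $x_a$ and $x_b$ and $g=0$ elsewhere, so I must show that exactly one chain top lies above $x_j$ unless $x_j\preceq x_q\wedge x_r$, in which case both do; this is where Lemma \ref{tasan1} enters, and I expect it to be the main obstacle. First, $x_a$ and $x_b$ must be incomparable: a relation $x_a\prec x_b$ would be a cross-comparability, so Lemma \ref{tasan1} would force $x_a\preceq x_q$ and $x_b\preceq x_r$, hence $x_a=x_q$ and $x_b=x_r$ with no strict inequality, contradicting the lemma. Both tops are therefore maximal, and the number of tops above $x_j$ is $1+\mathbf 1[x_j\preceq x_a\wedge x_b]$. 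Finally I would prove $x_a\wedge x_b=x_q\wedge x_r$: the meet $c=x_a\wedge x_b$ lies in one of the chains, say $c\in A_i$, so $c\prec x_b$ is a cross-comparability and Lemma \ref{tasan1} gives $c\preceq x_q$ together with $x_b=x_r$, whence $c\preceq x_q\wedge x_r$, while the reverse inequality is automatic from $x_q\preceq x_a$ and $x_r\preceq x_b$. This identifies $c(x_j)$ with $\mathbf 1[x_j\preceq x_a\wedge x_b]$ and finishes the verification. The delicate points throughout are the bookkeeping of which coatom covers which chain element (and the double counting of $x_p$ in the second case) and the repeated, careful use of Lemma \ref{tasan1} to control the comparabilities between the two chains.
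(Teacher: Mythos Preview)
Your argument is correct, and it takes a genuinely different route from the paper's. The paper establishes the vanishing outside $\mathrm{meetcl}(C_S(x_i))$ by citing an external lemma, then proves the remaining cases by a downward induction along $A_i\cup B_i$, splitting the recursion sum into several explicitly computed pieces for each of the subcases \eqref{1c}--\eqref{1f} and \eqref{2c}--\eqref{2d}. You instead (i) derive the vanishing directly from the coatom form of Weisner's theorem, and (ii) recast the recursion as the single triangular identity $\sum_{x_k\succeq x_j}\mu_S(x_k,x_i)=N(x_j)-1$, then verify the proposed formulas all at once via the substitution $g=\mu-\eta$ and an inclusion--exclusion over the chain tops. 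Both proofs rely on Lemma~\ref{tasan1} in exactly the same way to show that in the second case $x_a,x_b$ are incomparable with $x_a\wedge x_b=x_q\wedge x_r$.

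What each approach buys: the paper's induction is more elementary and entirely self-contained, at the cost of several parallel case computations. Your verification is shorter and more structural; the $g$-substitution makes transparent why the ``exceptional'' values $\eta-1$ and $\eta+1$ sit precisely at the maximal elements and at $x_a\wedge x_b$, since those are exactly the support of the inclusion--exclusion correction. The Weisner step also makes the proof independent of the cited external lemma.
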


\begin{proof}
First we take care of Equations \eqref{1g} and \eqref{2e} by noting that $\mu_S(x_j,x_i)$ may be nonzero only if $x_j=x_i$ or $x_j\in\mathrm{meetcl}(C_S(x_i))$ (see \cite[Lemma 3.2]{MatHau3}).  The recursive formula for the Möbius function implies that $\mu_S(x_i,x_i)=1$ and $\mu_S(x_j,x_i)=-1$ for $x_j\in C_S(x_i).$ In other words, Equations \eqref{1a}, \eqref{1b}, \eqref{2a} and \eqref{2b} are also true. If $x_j$ is maximal in $A_i\cup B_i$, then we have
\[
\mu_S(x_j,x_i)=-\sum_{x_j\prec x_k\preceq x_i}\mu_S(x_k,x_i)=-(\underbrace{(-1)+\cdots+(-1)}_{\eta(x_j)\mathrm{\ times}}+\underbrace{1}_{=\mu_S(x_i,x_i)})=\eta(x_j)-1,
\]
and therefore Equations \eqref{1c} and \eqref{2c} hold as well.

Next we take all the remaining, namely the non-maximal elements $x_j$ of $A_i\cup B_i$ into consideration. Assume first that no element $x_k\in C_S(x_i)$ is attached to both of the chains $A_i$ and $B_i$. We proceed inductively and suppose that the claim holds for each element $x_k\in A_i\cup B_i$ such that $j<k\leq n$. Assume next that the top elements are comparable, say $x_a\prec x_b$, and that $x_j=x_a$. Then we have
\begin{align*}
&\mu_S(x_a,x_i)=-\sum_{x_a\prec x_k\preceq x_i}\mu_S(x_k,x_i)=-\left(\underbrace{\sum_{x_b\preceq x_k\preceq x_i}\mu_S(x_k,x_i)}_{=0}+\sum_{\substack{x_a\prec x_k\preceq x_i\\x_b\not\preceq x_k}}\mu_S(x_k,x_i)\right)\\
&=-\left(\underbrace{\eta(x_a)}_{\substack{\mathrm{this\ many\ ele-}\\\mathrm{ments\ of\ }C_S(x_i)\\\mathrm{cover\ }x_a}}\cdot(-1)+\sum_{\substack{x_k\in A_i\cup B_i\\x_a\prec x_k\\x_k\neq x_b}}\left(\underbrace{\mu_S(x_k,x_i)}_{=\eta(x_k)}+\underbrace{\eta(x_k)}_{\substack{\mathrm{this\ many\ ele-}\\\mathrm{ments\ of\ }C_S(x_i)\\\mathrm{cover\ }x_k}}\cdot(-1)\right)\right)=\eta(x_a).
\end{align*}
This takes care of \eqref{1d}.

We are now in a position to prove one part of Equation \eqref{1e}. Suppose then that exactly one of the conditions $x_j\prec x_a$ and $x_j\prec x_b$ holds. We may assume that $x_j\prec x_a$ (the other case is similar). From the recursive formula we now obtain
\begin{align*}
&\mu_S(x_j,x_i)=-\sum_{x_j\prec x_k\preceq x_i}\mu_S(x_k,x_i)=-\left(\underbrace{\sum_{x_a\preceq x_k\preceq x_i}\mu_S(x_k,x_i)}_{=0}+\sum_{\substack{x_j\prec x_k\prec x_i\\x_a\not\preceq x_k}}\mu_S(x_k,x_i)\right)\\
&=-\left(\underbrace{\eta(x_j)}_{\substack{\mathrm{this\ many\ ele-}\\\mathrm{ments\ of\ }C_S(x_i)\\\mathrm{cover\ }x_j}}\cdot(-1)+\sum_{\substack{x_k\in A_i\\x_j\prec x_k\\x_k\neq x_a}}\left(\underbrace{\mu_S(x_k,x_i)}_{=\eta(x_k)}+\underbrace{\eta(x_k)}_{\substack{\mathrm{this\ many\ ele-}\\\mathrm{ments\ of\ }C_S(x_i)\\\mathrm{cover\ }x_k}}\cdot(-1)\right)\right)=\eta(x_j).
\end{align*}
This proves the first part of \eqref{1e}.

At this point we need to deal with Equation \eqref{1f}. This situation is illustrated in Figure \ref{fig:illustration}. If $x_j=x_a\wedge x_b$ and $x_a$ and $x_b$ are incomparable, then we have
\begin{align*}
&\mu_S(x_j,x_i)=-\left(\underbrace{\sum_{x_a\preceq x_k\preceq x_i}\mu_S(x_k,x_i)}_{=0}+\underbrace{\sum_{x_b\preceq x_k\prec x_i}\mu_S(x_k,x_i)}_{=-1}+\sum_{\substack{x_j\prec x_k\prec x_i\\x_a,x_b\not\preceq x_k}}\mu_S(x_k,x_i)\right)\\
&=-\left(-1+\underbrace{\eta(x_j)}_{\substack{\mathrm{this\ many\ ele-}\\\mathrm{ments\ of\ }C_S(x_i)\\\mathrm{cover\ }x_j}}\cdot(-1)+\sum_{\substack{x_k\in A_i\cup B_i\\x_j\prec x_k\prec x_a\\x_k\prec x_b}}\left(\underbrace{\mu_S(x_k,x_i)}_{=\eta(x_k)}+\underbrace{\eta(x_k)}_{\substack{\mathrm{this\ many\ ele-}\\\mathrm{ments\ of\ }C_S(x_i)\\\mathrm{cover\ }x_k}}\cdot(-1)\right)\right)\\&=\eta(x_j)+1.
\end{align*}
Thus we have proven Equation \eqref{1f}.

The last remaining case is the one in which $x_j\neq x_a\wedge x_b$ precedes both of the elements $x_a$ and $x_b$, which means that $x_j\prec x_a\wedge x_b$. Thus we go back and prove the rest of Equation \eqref{1e}. We obtain
\begin{align*}
&\mu_S(x_j,x_i)=-\left(\underbrace{\sum_{x_a\wedge x_b\preceq x_k\preceq x_i}\mu_S(x_k,x_i)}_{=0}+\sum_{\substack{x_j\prec x_k\prec x_i\\x_k\not\preceq x_a\wedge x_b }}\mu_S(x_k,x_i)\right)\\
&=-\left(\underbrace{\eta(x_j)}_{\substack{\mathrm{this\ many\ ele-}\\\mathrm{ments\ of\ }C_S(x_i)\\\mathrm{cover\ }x_j}}\cdot(-1)+\sum_{\substack{x_k\in A_i\cup B_i\\x_j\prec x_k\prec x_i\\x_a\wedge x_b\not\preceq x_k}}\left(\underbrace{\mu_S(x_k,x_i)}_{=\eta(x_k)}+\underbrace{\eta(x_k)}_{\substack{\mathrm{this\ many\ ele-}\\\mathrm{ments\ of\ }C_S(x_i)\\\mathrm{cover\ }x_k}}\cdot(-1)\right)\right)=\eta(x_j),
\end{align*}
which completes the proof of \eqref{1e}.

Assume second that there is exactly one element $x_p\in C_S(x_i)$ such that $A_i\ni x_q\lessdot x_p$ and $B_i\ni x_r\lessdot x_p$. It now follows from Lemma \ref{tasan1} that the top elements $x_a$ and $x_b$ of $A_i$ and $B_i$ must be incomparable (clearly $x_q\preceq x_a$ and $x_r\preceq x_b$). In fact, Lemma \ref{tasan1} implies that $x_a\wedge x_b=x_q\wedge x_r$. Since the other cases in Equation \eqref{2d} can be dealt inductively as earlier, we only need to check what happens when $x_j=x_a\wedge x_b$ (see Figure \ref{fig:illustration2}). Again we assume that the claim holds for all $x_k$ with $k>j$. Now we have
\begin{align*}
&\mu_S(x_j,x_i)=-\left(\underbrace{\sum_{x_a\preceq x_k\preceq x_i}\mu_S(x_k,x_i)}_{=0}+\underbrace{\sum_{x_b\preceq x_k\prec x_i}\mu_S(x_k,x_i)}_{=-1}+\sum_{\substack{x_j\prec x_k\prec x_i\\x_a\not\preceq x_k\\x_b\not\preceq x_k}}\mu_S(x_k,x_i)\right)\\
&=-\left(-1+\underbrace{\eta(x_j)}_{\substack{\mathrm{this\ many\ ele-}\\\mathrm{ments\ of\ }C_S(x_i)\\\mathrm{cover\ }x_j}}\cdot(-1)+\overbrace{
\sum_{\substack{x_k\in A_i\cup B_i\\x_j\prec x_k\prec x_a\\x_k\prec x_b}}\Big(\underbrace{\eta(x_k)}_{=\mu_S(x_k,x_i)}+\underbrace{\eta(x_k)}_{\substack{\mathrm{this\ many\ ele-}\\\mathrm{ments\ of\ }C_S(x_i)\\\mathrm{cover\ }x_k}}\cdot(-1)
\Big)}^{\text{here }\mu_S(x_p,x_i)=-1\text{ is counted twice}}\right)\\&=-(-1-\eta(x_j)+1)=\eta(x_j).
\end{align*}
Thus \eqref{2d} holds and our proof is complete.
\end{proof}

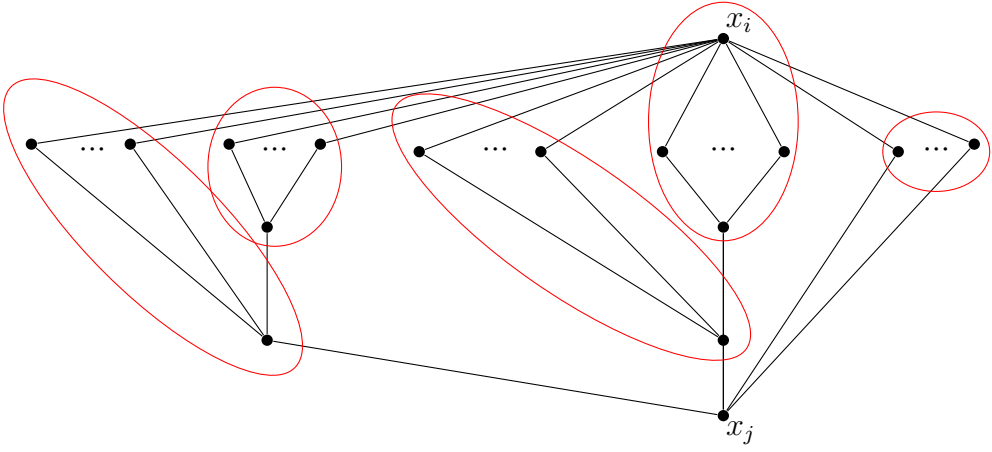
\begin{figure}[ht]
\tikzstyle{every node}=[fill,circle,inner sep=1.5pt]
\centering
\begin{tikzpicture}
\node (a) at (1.9,4.1)  {};
  \node[draw=none, fill=none] (b) at (2.7,4) {$\cdots$};
  \node (c) at (3.2,4.1)  {};
  \node (d) at (4.5,4.1)  {};
  \node[draw=none, fill=none] (e) at (5.1,4) {$\cdots$};
  \node (f) at (5.7,4.1) {};
  \node (h) at (7,4) {};
  \node[draw=none, fill=none] (i) at (8,4) {$\cdots$};
  \node (j) at (8.6,4) {};
  \node (k) at (10.2,4) {};
  \node[draw=none, fill=none] (l) at (11,4) {$\cdots$};
  \node (m) at (11.8,4) {};
  \node (n) at (13.3,4) {};
  \node[draw=none, fill=none] (o) at (13.8,4) {$\cdots$};
  \node (p) at (14.3,4.1) {};
  \node (q) at (11,3) {};
  \node (r) at (11,1.5) {};
  \node (s) at (11,0.5) {};
  \node (t) at (5,3) {};
	\node (u) at (11,5.5) {};
	\node (v) at (5,1.5) {};
	\node[draw=none, fill=none] [below right] at (s) {$x_j$};
	\node[draw=none, fill=none] [above right] at (u) {$x_i$};
	\path (a) edge (u);
	\path (c) edge (u);
	\path (d) edge (u);
	\path (f) edge (u);
	\path (h) edge (u);
	\path (j) edge (u);
	\path (k) edge (u);
	\path (m) edge (u);
	\path (n) edge (u);
	\path (p) edge (u);
	\path (s) edge (q);
	\path (s) edge (v);
	\path (a) edge (v);
	\path (c) edge (v);
	\path (s) edge (q);
	\path (t) edge (d);
	\path (f) edge (t);
	\path (k) edge (q);
	\path (m) edge (q);
	\path (t) edge (v);
	\path (h) edge (r);
	\path (j) edge (r);
	\path (s) edge (n);
	\path (s) edge (p);
	\draw[red] (11,4.4) ellipse (28 pt and 45 pt);
	\draw[red] (13.8,4) ellipse (20 pt and 15 pt);
	\draw[rotate around={135:(3.5,3)}, red] (3.5,3) ellipse (75 pt and 25 pt);
	\draw[red] (5.1,3.8) ellipse (25 pt and 30 pt);
	\draw[rotate around={145:(9,3)}, red] (9,3) ellipse (80 pt and 25 pt);
\end{tikzpicture}
\caption{Calculating the value of $\mu_S(x_j,x_i)$ when $x_j=x_a\wedge x_b$ and no element in $C_S(x_i)$ is attached to both chains. Counting together the sum of Möbius function values of each rounded collection of elements yields three zeros, one $-1$ and $-\eta(x_j)$.}
\label{fig:illustration}
\end{figure}

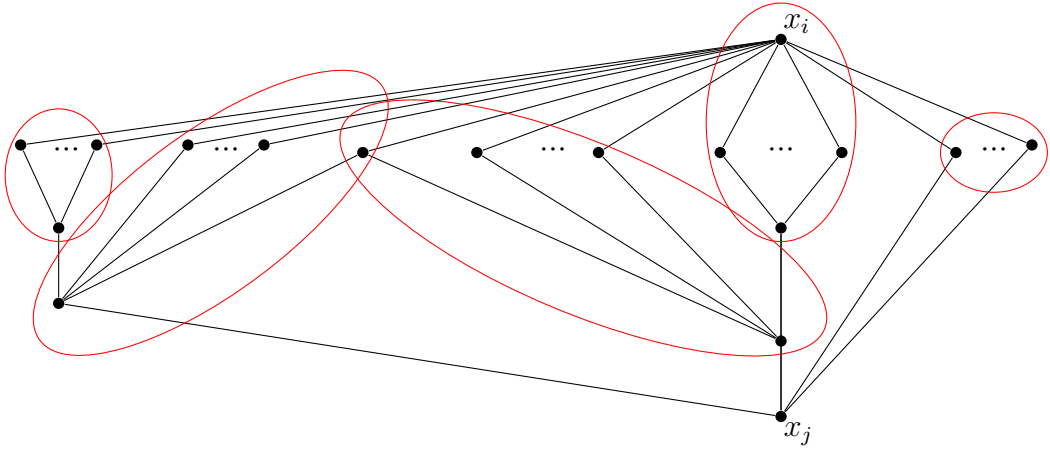
\begin{figure}[h!]
\tikzstyle{every node}=[fill,circle,inner sep=1.5pt]
\centering
\begin{tikzpicture}
\node (a) at (3.2,4.1)  {};
  \node[draw=none, fill=none] (b) at (3.7,4) {$\cdots$};
  \node (c) at (4.2,4.1)  {};
  \node (d) at (1,4.1)  {};
  \node[draw=none, fill=none] (e) at (1.6,4) {$\cdots$};
  \node (f) at (2,4.1) {};
  \node (g) at (5.5,4) {};
  \node (h) at (7,4) {};
  \node[draw=none, fill=none] (i) at (8,4) {$\cdots$};
  \node (j) at (8.6,4) {};
  \node (k) at (10.2,4) {};
  \node[draw=none, fill=none] (l) at (11,4) {$\cdots$};
  \node (m) at (11.8,4) {};
  \node (n) at (13.3,4) {};
  \node[draw=none, fill=none] (o) at (13.8,4) {$\cdots$};
  \node (p) at (14.3,4.1) {};
  \node (q) at (11,3) {};
  \node (r) at (11,1.5) {};
  \node (s) at (11,0.5) {};
  \node (t) at (1.5,3) {};
	\node (u) at (11,5.5) {};
	\node (v) at (1.5,2) {};
	\node[draw=none, fill=none] [below right] at (s) {$x_j$};
	\node[draw=none, fill=none] [above right] at (u) {$x_i$};
	\path (a) edge (u);
	\path (c) edge (u);
	\path (d) edge (u);
	\path (f) edge (u);
	\path (g) edge (u);
	\path (h) edge (u);
	\path (j) edge (u);
	\path (k) edge (u);
	\path (m) edge (u);
	\path (n) edge (u);
	\path (p) edge (u);
	\path (s) edge (q);
	\path (s) edge (v);
	\path (a) edge (v);
	\path (c) edge (v);
	\path (s) edge (q);
	\path (t) edge (d);
	\path (f) edge (t);
	\path (k) edge (q);
	\path (m) edge (q);
	\path (t) edge (v);
	\path (g) edge (v);
	\path (g) edge (r);
	\path (h) edge (r);
	\path (j) edge (r);
	\path (s) edge (n);
	\path (s) edge (p);
	\draw[red] (11,4.4) ellipse (28 pt and 45 pt);
	\draw[red] (13.8,4) ellipse (20 pt and 15 pt);
	\draw[rotate around={157:(8.4,3)}, red] (8.4,3) ellipse (98 pt and 32 pt);
	\draw[red] (1.5,3.7) ellipse (20 pt and 25 pt);
	\draw[rotate around={37:(3.5,3.2)}, red] (3.5,3.2) ellipse (80 pt and 30 pt);
\end{tikzpicture}
\caption{Calculating the value of $\mu_S(x_j,x_i)$ when $x_j=x_a\wedge x_b$ and one element in $C_S(x_i)$ is attached to both chains. Counting together the sum of Möbius function values of each collection of rounded elements yields three zeros, one $-1$ and $-\eta(x_j)$, but one $-1$ is counted twice.}
\label{fig:illustration2}
\end{figure}

\section{Invertibility of LCM matrices on GCD closed double-chain sets}\label{sect:inv}

From now on we consider the divisor lattice $(\mathbb{Z}_+,|)$. Suppose that the set $S=\{x_1,x_2,\ldots,x_n\}$ is GCD closed with $x_1<x_2<\cdots<x_n.$ In order to analyze the determinant of the LCM matrix $[S]$ it is useful to observe the following relation between the matrix $[S]$ and the reciprocal GCD matrix with $\frac{1}{\gcd(x_i,x_j)}$ as its $ij$ entry (similar observations are also made, for example, in \cite{MatHau3} and in \cite{KorMatHau}). By the simple fact that $\gcd(x_i,x_j)\mathrm{lcm}(x_i,x_j)=x_ix_j$ we have
\[
[S]=\mathrm{diag}(x_1,x_2,\ldots,x_n)\left(\frac{1}{\gcd(x_i,x_j)}\right)\mathrm{diag}(x_1,x_2,\ldots,x_n).
\]
In order to continue and factorize the matrix $\left(\frac{1}{\gcd(x_i,x_j)}\right)$ even further we adopt a technique introduced by Rajarama Bhat \cite{Bhat} in 1991. We apply Möbius inversion and define the function $\Psi_{S,\frac{1}{N}}$ on $S$ recursively as
\begin{equation*}
\Psi_{S,\frac{1}{N}}(x_1)=\frac{1}{x_1},\quad\Psi_{S,\frac{1}{N}}(x_i)=\frac{1}{x_i}-\sum_{x_j\prec x_i}\Psi_{S,\frac{1}{N}}(x_j)
\end{equation*}
or equivalently
\begin{equation}\label{eq:psi}
\Psi_{S,\frac{1}{N}}(x_i)=\sum_{x_j\preceq x_i}\frac{\mu_S(x_j,x_i)}{x_j}.
\end{equation}

Now the matrix $\left(\frac{1}{\gcd(x_i,x_j)}\right)$ may be written as
\[
\left(\frac{1}{\gcd(x_i,x_j)}\right)=E\,\mathrm{diag}(\Psi_{S,\frac{1}{N}}(x_1),\Psi_{S,\frac{1}{N}}(x_2),\ldots,\Psi_{S,\frac{1}{N}}(x_n))\,E^T,
\]
where $E=(e_{ij})$ is the $0,1$ incidence matrix of the set $S$ with
\[
e_{ij}=
\begin{cases}
1 &\text{if }x_j\preceq x_i,\\
0 &\text{otherwise.}
\end{cases}
\]
Putting all together we obtain
\begin{equation}\label{eq:factorization}
[S]=\Delta E\Lambda E^T\Delta=(\Delta E)\Lambda(\Delta E)^T,
\end{equation}
where $\Delta=\mathrm{diag}(x_1,x_2,\ldots,x_n)$ and $\Lambda=\mathrm{diag}(\Psi_{S,\frac{1}{N}}(x_1),\Psi_{S,\frac{1}{N}}(x_2),\ldots\Psi_{S,\frac{1}{N}}(x_n)).$

Since the matrix $\Delta E$ is clearly invertible (triangular matrix with nonzero diagonal elements), the matrix $[S]$ is invertible if and only if the matrix $\Lambda$ is invertible. Moreover, the invertibility of $\Lambda$ can be determined easily since 
\[
\det\Lambda=\Psi_{S,\frac{1}{N}}(x_1)\Psi_{S,\frac{1}{N}}(x_2)\cdots\Psi_{S,\frac{1}{N}}(x_n).
\]
From this we easily obtain the following fundamental result.

\begin{proposition}\label{th:invertibility}
If the set $S$ is GCD closed, then the LCM matrix $[S]$ is invertible if and only if $\Psi_{S,\frac{1}{N}}(x_i)\neq0$ for all $i=1,2,\ldots,n.$ 
\end{proposition}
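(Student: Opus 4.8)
The plan is that the statement should fall out immediately from the factorization \eqref{eq:factorization}, so almost all of the work has already been done; what remains is a short congruence argument. First I would note that the matrix $\Delta E$ appearing in \eqref{eq:factorization} is invertible. Indeed $\Delta=\mathrm{diag}(x_1,\ldots,x_n)$ has nonzero diagonal since the $x_i$ are distinct positive integers, while $E=(e_{ij})$ is lower triangular with $1$'s on the diagonal: under the ordering $x_1<x_2<\cdots<x_n$ the relation $x_j\preceq x_i$ forces $x_j\le x_i$ and hence $j\le i$, so $e_{ij}=0$ whenever $j>i$ and $e_{ii}=1$. Consequently $\det(\Delta E)=x_1x_2\cdots x_n\neq0$.

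Next I would use that congruence by an invertible matrix preserves nonsingularity. From $[S]=(\Delta E)\Lambda(\Delta E)^T$ we obtain $\det[S]=\big(\det(\Delta E)\big)^2\det\Lambda$, so $[S]$ is invertible if and only if $\det\Lambda\neq0$. Since $\Lambda=\mathrm{diag}(\Psi_{S,\frac1N}(x_1),\ldots,\Psi_{S,\frac1N}(x_n))$ is diagonal, $\det\Lambda=\prod_{i=1}^{n}\Psi_{S,\frac1N}(x_i)$, and a product of real numbers is nonzero exactly when each factor is nonzero. Chaining the two equivalences yields that $[S]$ is invertible if and only if $\Psi_{S,\frac1N}(x_i)\neq0$ for every $i$, which is precisely the assertion.

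Given \eqref{eq:factorization}, then, there is essentially no obstacle: the proposition is a one-line consequence of the diagonal form of $\Lambda$ together with the invertibility of the triangular factor $\Delta E$. The genuine content lies in the factorization itself, i.e. in the identity $\left(\frac1{\gcd(x_i,x_j)}\right)=E\Lambda E^T$ combined with the GCD-closedness of $S$ (which guarantees $x_i\wedge x_j=\gcd(x_i,x_j)\in S$, so that the telescoping defining $\Psi_{S,\frac1N}$ via \eqref{eq:psi} actually collapses the $ij$ entry of $E\Lambda E^T$ to $1/\gcd(x_i,x_j)$). That step is established before the statement and may be taken as given here; the only point I would double-check when writing the argument is that this factorization genuinely relies on GCD-closedness rather than on mere meet-closedness in an abstract lattice, since it is the membership $\gcd(x_i,x_j)\in S$ that makes the sum over $x_k\preceq x_i\wedge x_j$ correspond to the diagonal entries actually present in $\Lambda$.
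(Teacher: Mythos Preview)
Your proof is correct and follows essentially the same approach as the paper: both argue that $\Delta E$ is invertible (triangular with nonzero diagonal), so by the factorization \eqref{eq:factorization} the invertibility of $[S]$ reduces to that of the diagonal matrix $\Lambda$, whose determinant is the product of the $\Psi_{S,\frac1N}(x_i)$. Your version simply spells out a few details (the triangularity of $E$, the determinant identity) that the paper leaves implicit.
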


The next theorem gives us a method to at least narrow down the possible zero terms $\Psi_{S,\frac{1}{N}}(x_i)$ just by looking at the semilattice structure $(S,|).$

\begin{theorem}\label{th:i-tapaus}
Let $S$ be a GCD closed set. If the element $x_i\in S$ generates a double-chain set in $S$, then $\Psi_{S,\frac{1}{N}}(x_i)\neq 0.$
\end{theorem}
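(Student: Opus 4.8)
The plan is to evaluate $\Psi_{S,\frac{1}{N}}(x_i)$ directly from \eqref{eq:psi} by feeding in the explicit M\"obius values supplied by Theorem \ref{th:mobius}, and then to show that the resulting rational number is in fact strictly positive (hence nonzero) whenever $x_i$ covers at least two elements. The two degenerate cases are immediate and I would dispose of them first: if $x_i$ covers no element then $\Psi_{S,\frac{1}{N}}(x_i)=1/x_i>0$, and if $x_i$ covers a single element $z$ then $\mathrm{meetcl}(C_S(x_i))=\{z\}$ and $\Psi_{S,\frac{1}{N}}(x_i)=1/x_i-1/z\neq0$, since $z\mid x_i$ and $z<x_i$.

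So assume $|C_S(x_i)|\ge 2$. Since $C_S(x_i)$ is an antichain, its meet $\bigwedge C_S(x_i)$ lies strictly below every covered element and therefore belongs to $A_i\cup B_i$; in particular this set is nonempty, and moreover every $z\in C_S(x_i)$ covers, in $\mathrm{meetcl}(C_S(x_i))$, some element of $A_i\cup B_i$ (i.e. attaches to the double chain). Using Theorem \ref{th:mobius} I would write $\mu_S(x_k,x_i)=\eta(x_k)+\delta_k$ for $x_k\in A_i\cup B_i$, where the correction $\delta_k$ equals $-1$ exactly at the maximal elements of $A_i\cup B_i$, equals $+1$ only at $x_a\wedge x_b$ in the incomparable-tops case with no common attachment, and is $0$ otherwise. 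Substituting into \eqref{eq:psi} gives
\[
\Psi_{S,\frac{1}{N}}(x_i)=\frac{1}{x_i}+\sum_{x_k\in A_i\cup B_i}\frac{\eta(x_k)}{x_k}-\sum_{z\in C_S(x_i)}\frac{1}{z}+\sum_{x_k\in A_i\cup B_i}\frac{\delta_k}{x_k}.
\]

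The argument then rests on two observations. First, a divisor-lattice estimate: whenever $x_k\lessdot z$ we have $x_k\mid z$ and $x_k\neq z$, so $z\ge 2x_k$ and $1/z\le 1/(2x_k)$; summing this over all covering pairs $x_k\lessdot z$ (each $z\in C_S(x_i)$ occurring in at least one such pair, and, by Lemma \ref{max1}, all but at most one occurring in exactly one) yields $\sum_{z\in C_S(x_i)}1/z\le\frac12\sum_{x_k}\eta(x_k)/x_k$, so the middle two sums together are at least $\frac12\sum_{x_k}\eta(x_k)/x_k$. Second, a structural estimate: every maximal element $x_k$ of $A_i\cup B_i$ is a proper meet of elements of $C_S(x_i)$ lying strictly above it, and—since nothing in $A_i\cup B_i$ covers it—all of its covers in $\mathrm{meetcl}(C_S(x_i))$ already lie in $C_S(x_i)$; were there only one such cover $w$, everything above $x_k$ would be $\succeq w\succ x_k$, contradicting that $x_k$ is the meet of elements above it. Hence $\eta(x_k)\ge2$ at every maximal element.

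Combining the two estimates finishes the proof: the only negative corrections $\delta_k=-1$ sit at the (at most two, and in the common-attachment case exactly the incomparable pair $x_a,x_b$ by Lemma \ref{tasan1}) maximal elements, and there the bound $\tfrac12\cdot\eta(x_a)/x_a\ge 1/x_a$ already absorbs the term $-1/x_a$ (and likewise for $x_b$), while the possible $+1$ correction and the leading term $1/x_i$ only help; thus $\Psi_{S,\frac{1}{N}}(x_i)\ge 1/x_i>0$. The main obstacle is precisely the bookkeeping of these $\pm1$ boundary corrections together with the single element that may attach to both chains, i.e. ruling out an exact cancellation; what makes it go through is the pairing of each $-1$ at a chain top with the two covers forced there ($\eta\ge 2$), reinforced by the halving $1/z\le 1/(2x_k)$ that comes from working in $(\mathbb{Z}_+,\mid)$.
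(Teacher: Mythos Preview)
Your argument is correct and follows essentially the same route as the paper: feed the explicit M\"obius values of Theorem~\ref{th:mobius} into \eqref{eq:psi}, pair each $x_k\in A_i\cup B_i$ with the elements of $C_S(x_i)$ attached to it, and show the resulting expression is strictly positive when $|C_S(x_i)|\ge 2$ (the cases $|C_S(x_i)|\le 1$ being immediate). The only noteworthy difference is the numerical estimate at the maximal elements of $A_i\cup B_i$: the paper uses the sharper bound $\sum_{z\gtrdot x_j}x_j/z\le \tfrac12+\tfrac13+\cdots+\tfrac{1}{\eta(x_j)+1}$ coming from the distinctness of the quotients $z/x_j$, whereas you use the uniform bound $1/z\le 1/(2x_k)$ together with $\eta(x_k)\ge 2$, which is cruder but already enough to absorb the $-1/x_k$ correction; both yield $\Psi_{S,\frac{1}{N}}(x_i)\ge 1/x_i>0$.
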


\begin{proof}
We are going to show that since the element $x_i$ generates a double-chain set in $S$, in every nontrivial case the positive terms in the sum of Equation \eqref{eq:psi} cancel out all the negative terms. If $C_S(x_i)=\emptyset$, then $x_i$ must be the smallest element in $S$ (i.e. $i=1$) and we have
\[
\Psi_{S,\frac{1}{N}}(x_i)=\frac{1}{x_i}>0.
\]
Next, if $x_i$ covers only one element $x_k\in S$, then we obtain
\[
\Psi_{S,\frac{1}{N}}(x_i)=\frac{1}{x_i}-\frac{1}{x_k}<0,
\]
since $0<x_k<x_i$.

Now we may assume that there are at least two elements in $C_S(x_i)$. This means that there is at least one element in the set $A_i\cup B_i=\mathrm{meetcl}(C_S(x_i))\setminus C_S(x_i)$. In this case the terms in Equation \eqref{eq:psi} may be rearranged so that all the terms $\frac{\mu_S(x_k,x_i)}{x_k}$ with $x_k\in C_S(x_i)$ and $x_j\lessdot x_k$ are put together with the term $\frac{\mu_S(x_j,x_i)}{x_j}$, see Figure \ref{fig:illustration3}. We obtain 
\begin{align}\label{eq:ositettu psi}
\Psi_{S,\frac{1}{N}}(x_i)&\geq\frac{\mu_S(x_i,x_i)}{x_i}+\sum_{x_j\in A_i\cup B_i}\left(\frac{\mu_S(x_j,x_i)}{x_j}+\sum_{\substack{x_k\in C_S(x_i)\\x_j\lessdot x_k\lessdot x_i}}\frac{\mu_S(x_k,x_i)}{x_k}\right)\notag\\ 
&=\frac{1}{x_i}+\sum_{x_j\in A_i\cup B_i}\left(\frac{\mu_S(x_j,x_i)}{x_j}+\sum_{\substack{x_k\in C_S(x_i)\\x_j\lessdot x_k\lessdot x_i}}\frac{-1}{x_k}\right),\notag\\
\end{align}
where equality holds if no element in $C_S(x_i)$ is attached to both of the chains (if $x_k$ is attached to both chains, then the terms differ by $\frac{1}{x_k}$). It should be pointed out that in the above sum $x_j\lessdot x_k\lessdot x_i$ means that these elements cover one another in the poset $\mathrm{meetcl}(C_S(x_i)),$ but not necessarily in $S$ (also note that some of the above sums might be empty depending on the number of elements in $A_i$ and $B_i$). We are going to show that under these circumstances every summand $$\frac{\mu_S(x_j,x_i)}{x_j}+\sum_{\substack{x_k\in C_S(x_i)\\x_j\lessdot x_k\lessdot x_i}}\frac{-1}{x_k}$$
is in fact positive, which implies that $\Psi_{S,\frac{1}{N}}(x_i)>0$ (and particularly that $\Psi_{S,\frac{1}{N}}(x_i)\neq 0$).

In the case when $x_j$ is a maximal element in $A_i\cup B_i$ we have $\eta(x_j)\geq 2$ and $\mu_S(x_j,x_i)=\eta(x_j)-1$ by Theorem \ref{th:mobius}. In addition, for every $x_k\in C_S(x_i)$ with $x_j\lessdot x_k\lessdot x_i$ we have $x_k=a_kx_j$, where the $a_k$'s are distinct and $a_k\geq 2$ for all $k=1,\ldots,\eta(x_j).$ Since $\mu_S(x_j,x_i)\geq \eta(x_j)-1$, we have
\begin{align*}
&\frac{\mu_S(x_j,x_i)}{x_j}+\sum_{\substack{x_k\in C_S(x_i)\\x_j\lessdot x_k\lessdot x_i}}\frac{-1}{x_k}=\frac{\eta(x_j)}{x_j}-\frac{1}{x_j}-\underbrace{\left(\sum_{\substack{x_k\in C_S(x_i)\\x_j\lessdot x_k\lessdot x_i}}\frac{1}{x_k}\right)}_{\eta(x_j)\text{\ terms}}\\
&=\frac{1}{x_j}\left(\eta(x_j)-1-\sum_{\substack{x_k\in C_S(x_i)\\x_j\lessdot x_k\lessdot x_i}}\frac{x_j}{x_k}\right)\geq \frac{1}{x_j}\left(\eta(x_j)-1-\left(\frac{1}{2}+\frac{1}{3}+\cdots+\frac{1}{\eta(x_j)+1}\right)\right)\\
&\geq \frac{1}{x_j}\left(\eta(x_j)-\frac{3}{2}-\frac{\eta(x_j)-1}{3}\right)=\frac{4\eta(x_j)-7}{6x_j}\geq\frac{4\cdot2-7}{6x_j}=\frac{1}{6x_j}>0.
\end{align*}

For all the non-maximal elements $x_j\in A_i\cup B_i$ we have $\mu_S(x_j,x_i)\geq \eta(x_j)$ by Theorem \ref{th:mobius} and therefore
\begin{align*}
&\frac{\mu_S(x_j,x_i)}{x_j}+\sum_{\substack{x_k\in C_S(x_i)\\x_j\lessdot x_k\lessdot x_i}}\frac{-1}{x_k}\geq \frac{\eta(x_j)}{x_j}+\underbrace{\sum_{\substack{x_k\in C_S(x_i)\\x_j\lessdot x_k\lessdot x_i}}\frac{-1}{x_k}}_{\eta(x_j)\text{\ terms}}=\sum_{\substack{x_k\in C_S(x_i)\\x_j\lessdot x_k\lessdot x_i}}\underbrace{\left(\frac{1}{x_j}-\frac{1}{x_k}\right)}_{>0}>0.
\end{align*}
Thus we have shown that $\Psi_{S,\frac{1}{N}}(x_i)>0$ when there are at least two elements in the set $C_S(x_i)$ and therefore our proof is complete.
\end{proof}

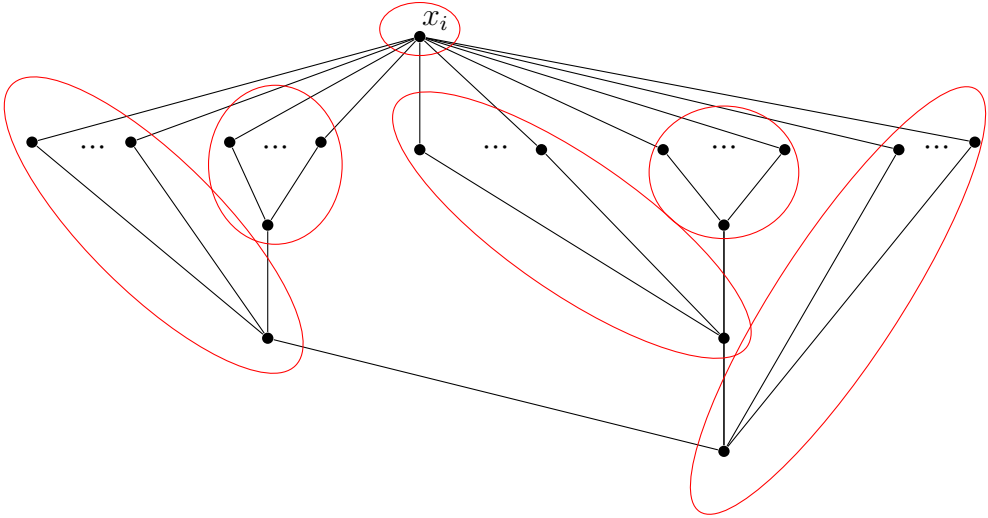
\begin{figure}[ht]
\tikzstyle{every node}=[fill,circle,inner sep=1.5pt]
\centering
\begin{tikzpicture}
\node (a) at (1.9,4.1)  {};
  \node[draw=none, fill=none] (b) at (2.7,4) {$\cdots$};
  \node (c) at (3.2,4.1)  {};
  \node (d) at (4.5,4.1)  {};
  \node[draw=none, fill=none] (e) at (5.1,4) {$\cdots$};
  \node (f) at (5.7,4.1) {};
  \node (h) at (7,4) {};
  \node[draw=none, fill=none] (i) at (8,4) {$\cdots$};
  \node (j) at (8.6,4) {};
  \node (k) at (10.2,4) {};
  \node[draw=none, fill=none] (l) at (11,4) {$\cdots$};
  \node (m) at (11.8,4) {};
  \node (n) at (13.3,4) {};
  \node[draw=none, fill=none] (o) at (13.8,4) {$\cdots$};
  \node (p) at (14.3,4.1) {};
  \node (q) at (11,3) {};
  \node (r) at (11,1.5) {};
  \node (s) at (11,0) {};
  \node (t) at (5,3) {};
	\node (u) at (7,5.5) {};
	\node (v) at (5,1.5) {};
	\node[draw=none, fill=none] [above right] at (u) {$x_i$};
	\path (a) edge (u);
	\path (c) edge (u);
	\path (d) edge (u);
	\path (f) edge (u);
	\path (h) edge (u);
	\path (j) edge (u);
	\path (k) edge (u);
	\path (m) edge (u);
	\path (n) edge (u);
	\path (p) edge (u);
	\path (s) edge (q);
	\path (s) edge (v);
	\path (a) edge (v);
	\path (c) edge (v);
	\path (s) edge (q);
	\path (t) edge (d);
	\path (f) edge (t);
	\path (k) edge (q);
	\path (m) edge (q);
	\path (t) edge (v);
	\path (h) edge (r);
	\path (j) edge (r);
	\path (s) edge (n);
	\path (s) edge (p);
	\draw[red] (11,3.7) ellipse (28 pt and 25 pt);
	\draw[rotate around={57:(12.5,2)}, red] (12.5,2) ellipse (95 pt and 23 pt);
	\draw[rotate around={135:(3.5,3)}, red] (3.5,3) ellipse (75 pt and 25 pt);
	\draw[red] (5.1,3.8) ellipse (25 pt and 30 pt);
	\draw[rotate around={145:(9,3)}, red] (9,3) ellipse (80 pt and 25 pt);
	\draw[red] (7,5.6) ellipse (15 pt and 10 pt);
\end{tikzpicture}
\caption{Illustration on how the sum in Equation \eqref{eq:ositettu psi} is partitioned in the case when no element of $C_S(x_i)$ is attached to both of the chains.}
\label{fig:illustration3}
\end{figure}

The following corollary is an immediate consequence of Theorem \ref{th:i-tapaus} and Proposition \ref{th:invertibility}.

\begin{corollary}\label{cor:1-n}
If the set $S$ is GCD closed and every element $x_i\in S$ generates a double-chain set in $S$, then the LCM matrix $[S]$ is invertible.
\end{corollary}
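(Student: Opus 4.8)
The plan is to derive Corollary \ref{cor:1-n} directly from the two results that immediately precede it, namely Theorem \ref{th:i-tapaus} and Proposition \ref{th:invertibility}. Since the hypothesis asserts that \emph{every} element $x_i\in S$ generates a double-chain set in $S$, I would apply Theorem \ref{th:i-tapaus} separately to each such element. That theorem guarantees, for a single element generating a double-chain set, that the corresponding diagonal entry $\Psi_{S,\frac{1}{N}}(x_i)$ is nonzero.

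First I would observe that the double-chain hypothesis holds for $x_i$ no matter which index $i\in\{1,2,\ldots,n\}$ we select, so Theorem \ref{th:i-tapaus} yields $\Psi_{S,\frac{1}{N}}(x_i)\neq 0$ for \emph{all} $i=1,2,\ldots,n$ simultaneously. Next I would invoke Proposition \ref{th:invertibility}, which states that for a GCD closed set $S$ the LCM matrix $[S]$ is invertible if and only if $\Psi_{S,\frac{1}{N}}(x_i)\neq 0$ for every $i$. Since we have just verified that this condition holds for each index, the ``if'' direction of Proposition \ref{th:invertibility} gives the invertibility of $[S]$ immediately.

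The entire argument is essentially a one-line quantifier combination: the pointwise nonvanishing established element-by-element by Theorem \ref{th:i-tapaus} feeds directly into the product criterion $\det\Lambda=\prod_{i=1}^n\Psi_{S,\frac{1}{N}}(x_i)\neq 0$ underlying Proposition \ref{th:invertibility}. There is no real obstacle here, as the corollary is designed to be an immediate consequence; the only thing to be careful about is to note explicitly that $S$ remaining GCD closed is exactly the standing hypothesis needed for both the factorization \eqref{eq:factorization} and for Proposition \ref{th:invertibility} to apply. Thus the proof consists of quantifying Theorem \ref{th:i-tapaus} over all $i$ and then applying the invertibility criterion.
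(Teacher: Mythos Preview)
Your proposal is correct and follows essentially the same approach as the paper: apply Theorem \ref{th:i-tapaus} to every $x_i$ to obtain $\Psi_{S,\frac{1}{N}}(x_i)\neq 0$ for all $i$, and then conclude invertibility via the product criterion behind Proposition \ref{th:invertibility}. The paper phrases the last step as $\det[S]=\prod_i \Psi_{S,\frac{1}{N}}(x_i)\neq 0$ rather than citing Proposition \ref{th:invertibility} explicitly, but the content is identical.
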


\begin{proof}
Theorem \ref{th:i-tapaus} implies that $\Psi_{S,\frac{1}{N}}(x_i)\neq 0$ for all $x_i\in S$. Therefore we must have
\[
\det [S]=\Psi_{S,\frac{1}{N}}(x_1)\Psi_{S,\frac{1}{N}}(x_2)\cdots\Psi_{S,\frac{1}{N}}(x_n)\neq0.
\]
\end{proof}

Corollary \ref{cor:1-n} itself has interesting consequences as well. It can easily be used to show one more time that the Bourque-Ligh conjecture holds for all GCD closed sets with at most $7$ elements. Moreover, at the same time we are able to show that among all the meet semilattice structures with $8$ elements there is only one possible exception such that GCD closed sets isomorphic to it may not satisfy the Bourque-Ligh conjecture. In \cite{KorMatHau} and in \cite{MatHau3} this same information was obtained by generating all possible semilattice structures and by going through dozens of special cases.

\begin{theorem}\label{th:B-L-conjecture}
The LCM matrix of any GCD closed set with at most $7$ elements is invertible. Moreover, if there are $8$ elements in the set $S$ and the matrix $[S]$ is not invertible, then $(S,|)$ is isomorphic to the cube semilattice (see Figure \ref{fig:ex1} (e)).
\end{theorem}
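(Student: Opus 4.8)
The plan is to reduce both assertions to a single counting statement about the principal down-set of an element that fails to generate a double-chain set, and then to invoke Corollary \ref{cor:1-n}. Concretely, I would first establish the following claim: if $x_i\in S$ does not generate a double-chain set, then the down-set $D_i=\{y\in S\mid y\preceq x_i\}$ contains at least $8$ elements. Granting this, the first assertion is immediate: when $|S|\leq 7$ no $D_i$ can reach $8$ elements, so every $x_i$ generates a double-chain set and Corollary \ref{cor:1-n} yields the invertibility of $[S]$. For the second assertion, suppose $|S|=8$ and $[S]$ is singular; by the contrapositive of Corollary \ref{cor:1-n} some $x_i$ fails to generate a double-chain set, whence $|D_i|\geq 8=|S|$ forces $D_i=S$, so $x_i$ is the maximum of $S$ and the whole structure is pinned down by the equality case of the claim.

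To prove the claim I would start from Theorem \ref{th:width}: failing to generate a double-chain set means the width of $M_i:=\mathrm{meetcl}(C_S(x_i))\setminus C_S(x_i)$ is at least $3$, so $M_i$ contains an antichain $\{w_1,w_2,w_3\}$ of three pairwise incomparable elements, none lying in $C_S(x_i)$. Since every element of $M_i$ is a meet of at least two covers and the covers form an antichain, each $w_r$ lies strictly below at least two members of $C_S(x_i)$; in particular $|C_S(x_i)|\geq 3$ (the Remark disposes of the cases where $x_i$ covers at most one element, and two covers yield only the single meet $\bigwedge C_S(x_i)$, hence width at most $1$). The elements $x_i$, the covers, and $w_1,w_2,w_3$ are then $1+|C_S(x_i)|+3$ distinct members of $D_i$, which gives $|D_i|\geq 7$ when $|C_S(x_i)|=3$ and $|D_i|\geq 8$ when $|C_S(x_i)|\geq 4$.

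The extra element needed in the case $|C_S(x_i)|=3$ is forced by meet-closedness. Because $w_1$ and $w_2$ are incomparable, the meet $w_1\wedge w_2$ exists in $S$, lies strictly below both, and cannot equal $w_3$ (otherwise $w_3\preceq w_1$); nor can it be a cover (a cover below $w_1$ would be strictly below another cover, contradicting the antichain property). Hence $w_1\wedge w_2$ is an element of $D_i$ distinct from $x_i$, from every cover, and from $w_1,w_2,w_3$, lifting the bound to $|D_i|\geq 8$ when $|C_S(x_i)|=3$ and to $|D_i|\geq 9$ when $|C_S(x_i)|\geq 4$. For the equality analysis I would then observe that $|D_i|=8$ forces $|C_S(x_i)|=3$: with three covers $c_1,c_2,c_3$ the set $M_i$ consists of the three pairwise meets $c_1\wedge c_2,\,c_1\wedge c_3,\,c_2\wedge c_3$ (these must be the antichain $w_1,w_2,w_3$, since the triple meet is comparable to every element of $M_i$) together with the triple meet $c_1\wedge c_2\wedge c_3$, which is the unique bottom and equals each $w_r\wedge w_s$. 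This determines all covering relations and identifies $D_i=S$ with the Boolean lattice $B_3$, i.e. the cube semilattice of Figure \ref{fig:ex1} (e).

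I expect the main obstacle to be the equality case. One must argue carefully that no element of $S$ beyond those already exhibited can occur, so that $D_i$ is \emph{exactly} the cube rather than some larger structure merely containing it, and that $|C_S(x_i)|\geq 4$ is genuinely incompatible with $|D_i|=8$. Both points hinge on the repeated use of meet-closedness to manufacture new elements strictly below incomparable ones, so the delicate bookkeeping of which meets can or cannot coincide is where the argument must be made watertight.
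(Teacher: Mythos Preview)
Your proposal is correct and follows essentially the same route as the paper: both argue via the contrapositive of Corollary~\ref{cor:1-n}, locate an element $x_i$ not generating a double-chain set, deduce $|C_S(x_i)|\geq 3$ and an antichain of size three in $\mathrm{meetcl}(C_S(x_i))\setminus C_S(x_i)$, and then use meet-closedness to produce an eighth element (the paper takes the triple meet $x_p\wedge x_q\wedge x_r$, you take a pairwise meet $w_1\wedge w_2$, which in the cube coincide). Your equality analysis is in fact more careful than the paper's, which simply asserts that the cube is the unique $8$-element semilattice meeting the listed criteria.
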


\begin{proof}
Suppose that $S$ is a GCD closed set such that the matrix $[S]$ is not invertible. Corollary \ref{cor:1-n} implies that there is at least one element $x_i\in S$ such that the element $x_i$ does not generate a double-chain set in $S$. This means that there must be at least three elements in the set $C_S(x_i)$ (otherwise $x_i$ would generate a double-chain set in $S$). And since the set $\mathrm{meetcl}(C_S(x_i))\setminus C_S(x_i)$ cannot be presented as a union of two distinct chains, there must be at least three incomparable elements $x_p$, $x_q$ and $x_r$ in the set $\mathrm{meetcl}(C_S(x_i))\setminus C_S(x_i)$. Moreover, all the elements $x_p$, $x_q$ and $x_r$ are of the form $x_{i_1}\wedge x_{i_2}\wedge\cdots\wedge x_{i_k}$, where $x_{i_1},x_{i_2},\ldots,x_{i_k}\in C_S(x_i)$ and $k\geq 2$. This means that every one of the elements $x_p$, $x_q$ and $x_r$ precedes at least two of the elements in $C_S(x_i)$. And finally, since the set $\mathrm{meetcl}(C_S(x_i))\setminus C_S(x_i)\subseteq S$ is meet closed, also the element $x_p\wedge x_q\wedge x_r$ must belong to this set. All in all, there must be at least $8$ elements in the set $S$, and the cube semilattice presented in Figure \ref{fig:ex1} (e) is the only $8$-element meet semilattice that meets all these criteria.
\end{proof}

\begin{remark}
It should be noted that the method used in Theorem \ref{th:i-tapaus} may be applicable even if the element $x_i$ does not generate a double-chain set in $S$. For example, if $S$ is a GCD closed set with its structure is presented in Figure \ref{fig:non-AB-set}, the matrix $[S]$ is invertible although the maximum element $x_{12}$ does not generate a double-chain set in $S$. The explanation to this is that the method of calculating the Möbius function values introduced in Theorem \ref{th:mobius} actually works also in the case of this non-double-chain set, and therefore the arguments used in Theorem \ref{th:i-tapaus} will also work.
\end{remark}

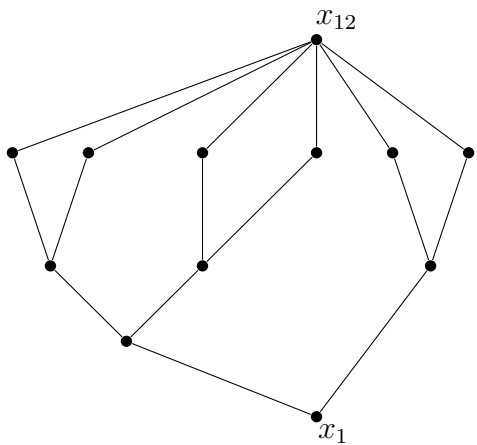
\begin{figure}[ht]
\tikzstyle{every node}=[fill,circle,inner sep=1.5pt]
\centering
\begin{tikzpicture}
\node (a) at (0,6)  {};
  \node (b) at (-4,4.5)  {};
  \node (c) at (-3,4.5)  {};
  \node (d) at (-1.5,4.5) {};
  \node (e) at (0,4.5) {};
  \node (f) at (1,4.5) {};
  \node (g) at (2,4.5) {};
  \node (h) at (-3.5,3) {};
  \node (i) at (-1.5,3) {};
  \node (j) at (1.5,3) {};
  \node (k) at (-2.5,2) {};
  \node (l) at (0,1) {};
	\node[draw=none, fill=none] [above right] at (a) {$x_{12}$};
	\node[draw=none, fill=none] [below right] at (l) {$x_1$};
	\path (a) edge (b);
	\path (a) edge (c);
	\path (a) edge (d);
	\path (a) edge (e);
	\path (a) edge (f);
	\path (a) edge (g);
	\path (b) edge (h);
	\path (c) edge (h);
	\path (d) edge (i);
	\path (e) edge (i);
	\path (f) edge (j);
	\path (g) edge (j);
	\path (h) edge (k);
	\path (i) edge (k);
	\path (k) edge (l);
	\path (j) edge (l);
\end{tikzpicture}
\caption{An example of a non-double-chain set $S$ whose LCM matrix is nevertheless invertible by the argument used in Theorem \ref{th:i-tapaus}.}
\label{fig:non-AB-set}
\end{figure}

It turns out that our theorems can rather easily be applied in the study of LCM matrices of so-called $r$-fold GCD closed sets $S$. First, however, we need to define this concept. 

\begin{definition}\cite[Definition 2.1]{Hong06}\label{def:r-fold}
Let $T$ be a set of $n$ distinct positive integers and $r\in [1,n-1]$ be an integer. We say that $T$ is a $0$-\emph{fold GCD closed set} if $T$ is GCD closed. We say that $T$ is an $r$-\emph{fold GCD closed set} if there is a divisor chain $R\subseteq T$ with $|R|=r$ such that $\max(R)\,|\,\min(T\setminus R)$ and the set $T\setminus R$ is GCD closed.  
\end{definition}

We are now ready to give a novel lattice-theoretic proof for the following result whose first appearance was in Shaofang Hong's PhD thesis in 1998.

\begin{corollary}\cite[Theorem 2.1 (ii)]{Hong06}\label{th:r-fold}
If $n\geq 8$ and $S=\{x_1,x_2,\ldots,x_n\}$ is an $(n-7)$-fold GCD closed set, then the LCM matrix $[S]$ is nonsingular.
\end{corollary}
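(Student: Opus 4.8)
The plan is to reduce the statement to Corollary \ref{cor:1-n} by proving that \emph{every} element of $S$ generates a double-chain set in $S$. First I would unwind Definition \ref{def:r-fold}: since $S$ is $(n-7)$-fold GCD closed, there is a divisor chain $R\subseteq S$ with $|R|=n-7$ such that $\max(R)\mid\min(S\setminus R)$ and $S':=S\setminus R$ is GCD closed with $|S'|=7$. Geometrically, the Hasse diagram of $(S,\mid)$ is the chain $R$ placed entirely below $S'$ and glued to it through the relation $\max(R)\mid\min(S')$.

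Before invoking the machinery I must check that $S$ itself is GCD closed, since this is required both by the factorization \eqref{eq:factorization} and by Corollary \ref{cor:1-n}. The pairs inside $R$ (a chain) and inside $S'$ (GCD closed) are immediate, so only mixed pairs $x\in R$, $y\in S'$ remain. The key observation is that $\min(S')$ divides every element of $S'$: the element $\gcd(\min(S'),y)$ lies in $S'$ because $S'$ is GCD closed, and since it divides $\min(S')$ while being an element of $S'$, it must equal $\min(S')$. Hence $x\mid\max(R)\mid\min(S')\mid y$, so $\gcd(x,y)=x\in S$, and $S$ is GCD closed.

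Next I would verify the double-chain property element by element. Each element of the chain $R$ covers at most one element of $S$, and $\min(S')$ covers exactly the single element $\max(R)$; in all these cases the element trivially generates a double-chain set (take both chains empty). The crux is the remaining $y\in S'$ with $y\neq\min(S')$. For such $y$ I would argue that gluing $R$ beneath $S'$ changes nothing locally: every element of $R$ is strictly below $\min(S')\preceq y$, so no element of $R$ is covered by $y$ and $C_S(y)=C_{S'}(y)$; and because $S'$ is meet closed, the meets (gcd's) of elements of $C_{S'}(y)$ remain in $S'$, whence $\mathrm{meetcl}(C_S(y))=\mathrm{meetcl}(C_{S'}(y))$. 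Consequently $\mathrm{meetcl}(C_S(y))\setminus C_S(y)$ is literally the same set whether formed in $S$ or in $S'$, so $y$ generates a double-chain set in $S$ if and only if it does so in $S'$. Since $|S'|=7$, the counting argument in the proof of Theorem \ref{th:B-L-conjecture} shows that a GCD closed set with at most $7$ elements has no element failing the double-chain property, so every such $y$ qualifies. Having established the property for all of $S$, Corollary \ref{cor:1-n} gives at once that $[S]$ is nonsingular.

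The main obstacle I anticipate is the bookkeeping in this transfer step: one must be certain that both the covering relation and the meet closure, when restricted to $S'$, are genuinely unaffected by the chain $R$ attached below, and one must extract from the proof of Theorem \ref{th:B-L-conjecture} the sharper conclusion that in a GCD closed set of size at most $7$ \emph{every} element generates a double-chain set, rather than merely that the determinant is nonzero. Once these two points are nailed down, the remaining verifications are routine.
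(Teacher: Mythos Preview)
Your proposal is correct and follows essentially the same route as the paper: both reduce to Corollary~\ref{cor:1-n} by checking that every element of $S$ generates a double-chain set, handling the chain $R$ and $\min(S')$ trivially and then invoking the counting argument from the proof of Theorem~\ref{th:B-L-conjecture} for the remaining elements of $S'$. Your version is in fact more careful than the paper's, since you explicitly verify that $S$ is GCD closed and that the covering relations and meet closures transfer intact from $S$ to $S'$---points the paper leaves implicit.
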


\begin{proof}
Suppose that $S=\{x_1,x_2,\ldots,x_n\}$ is an $(n-7)$-fold GCD closed set, where $x_1<x_2<\ldots<x_n$. We only need to show that every element $x_i\in S$ generates a double-chain set in $S$, and then the claim follows directly from Corollary \ref{cor:1-n}. If $1\leq i\leq n-7$, then $x_i$ is a member of the divisor chain and covers at most one element in the semilattice $(S,|)$ and clearly $x_i$ generates a double-chain set in$S$. The case when $x_i=x_{n-6}$ is similar, since $C_S(x_{n-6})=\{x_{n-7}\}$. If $n-5\leq i\leq n$, then $\mathrm{meetcl}(C_S(x_i))\subseteq\{x_{n-6},x_{n-5},\ldots,x_{n-1}\}$. Since there are at most $6$ elements in this semilattice we may continue as in the proof of Theorem \ref{th:B-L-conjecture} and show that there cannot be three incomparable elements in the set $C_S(x_i)$. Thus also in this case $x_i$ generates a double-chain set in $S$.
\end{proof}

If $S$ is a GCD closed $\wedge$-tree set, then every element $x_i\in S$ trivially generates a double-chain set in $S$. We directly obtain the following corollary, which is another immediate consequence of Corollary \ref{cor:1-n}.

\begin{corollary}\label{cor:tree set inv}
The matrix $[S]$ is invertible for all GCD closed $\wedge$-tree sets $S$.
\end{corollary}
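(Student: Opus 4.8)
The plan is to reduce the statement directly to Corollary \ref{cor:1-n}, so that the only thing left to establish is that in a GCD closed $\wedge$-tree set every element generates a double-chain set. Since $S$ is GCD closed it is already meet closed, and hence $\mathrm{meetcl}(S)=S$; the defining hypothesis of a $\wedge$-tree set then says precisely that the Hasse diagram of $S$ itself is a tree.

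The crux is the observation that a tree-shaped Hasse diagram forces every element to cover at most one element. I would argue this by contradiction: if some $x_i$ covered two distinct (necessarily incomparable) elements $y,z\in S$, then since $S$ is meet closed the element $y\wedge z$ lies in $S$ and satisfies $y\wedge z\prec y$ and $y\wedge z\prec z$. The covering edges $y\lessdot x_i$ and $z\lessdot x_i$, together with the two descending saturated chains from $y$ and from $z$ down to $y\wedge z$, then form a cycle in the underlying undirected graph of the Hasse diagram, contradicting that it is a tree. Alternatively one may simply invoke \cite[Lemma 4.1]{MatHau2}, where this fact is already recorded for $\wedge$-tree sets (and is precisely what is used in the proof of Theorem \ref{th:A-set}). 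Either route gives $|C_S(x_i)|\leq 1$ for every $x_i\in S$.

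Once $|C_S(x_i)|\leq 1$ is known, the rest is immediate: the meet closure of a set with at most one element is that set itself, so $\mathrm{meetcl}(C_S(x_i))\setminus C_S(x_i)=\emptyset$. Taking $A_i=B_i=\emptyset$ exhibits this empty set as a union of two disjoint chains, whence every $x_i$ generates a double-chain set in $S$. Corollary \ref{cor:1-n} then applies verbatim and yields the invertibility of $[S]$. I expect no genuine obstacle here, since essentially all of the content has been front-loaded into Corollary \ref{cor:1-n}; the only non-bookkeeping step is the short graph-theoretic argument that a tree Hasse diagram precludes an element covering two incomparable elements, and that argument is both elementary and available as a citation.
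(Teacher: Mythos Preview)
Your proposal is correct and follows exactly the route the paper takes: the text preceding the corollary already notes that in a GCD closed $\wedge$-tree set every $x_i$ trivially generates a double-chain set (since $|C_S(x_i)|\leq 1$ by \cite[Lemma 4.1]{MatHau2}), and the corollary is stated as an immediate consequence of Corollary~\ref{cor:1-n}. You have simply spelled out in more detail the one-line justification the paper leaves implicit.
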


Even Corollary \ref{cor:tree set inv} itself has a few interesting consequences. In 1998 Hong applied his number-theoretic method and proved the results listed in Corollary \ref{cor:hong98}. Since in each case the set $S$ happens to be a GCD closed $\wedge$-tree set, each claim follows directly from Corollary \ref{cor:tree set inv}.

\begin{corollary}\cite[Corollary 3.2, Corollary 3.3 and Corollary 3.6]{Hong98}\label{cor:hong98}
Let $$S=\{x_1,x_2,\ldots,x_n\}$$ be a GCD closed set with $x_1<x_2<\cdots<x_n$.
\begin{enumerate}
\item If $x_i\,|\,x_j$ for all $1\leq i<j\leq n$, then the LCM matrix $[S]$ is nonsingular.
\item If the numbers $\frac{x_2}{x_1},\frac{x_3}{x_1},\ldots,\frac{x_n}{x_1}$ are pairwise relatively prime, then the LCM matrix [S] is nonsingular. 
\item If for each $x_k$, where $3\leq k\leq n$, there exists at most one $x_{i_k}\in S$ such that $2\leq i_k\leq k-1$ and $x_{i_k}\,|\,x_k$, then the LCM matrix $[S]$ is nonsingular. 
\end{enumerate}
\end{corollary}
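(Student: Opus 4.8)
The plan is to verify, in each of the three cases, that the set $S$ is a $\wedge$-tree set and then to invoke Corollary \ref{cor:tree set inv} directly. Since $S$ is GCD closed it is already meet closed, so $\mathrm{meetcl}(S)=S$ and it suffices to show that the Hasse diagram of $(S,|)$ itself is a tree. I would also record at the outset the basic fact that, because $S$ is GCD closed, the element $x_1=\gcd(x_1,\ldots,x_n)$ is the global minimum of $(S,|)$ and hence $x_1\mid x_k$ for every $k$; this makes the diagram automatically connected, so in each case the real task is only to check that no element covers two elements.

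For part (1), the hypothesis $x_i\mid x_j$ for all $i<j$ says that $(S,|)$ is a chain, whose Hasse diagram is a path and therefore a tree. For part (2), I would compute, for any $2\le i<j\le n$, that $\gcd(x_i,x_j)=x_1\gcd(x_i/x_1,x_j/x_1)=x_1$ since $x_i/x_1$ and $x_j/x_1$ are coprime; the same coprimality rules out $x_m\mid x_i$ for distinct $m,i\ge 2$, so no element of $S$ lies strictly between $x_1$ and any $x_i$. Thus every $x_i$ with $i\ge2$ covers exactly $x_1$ and nothing else, the diagram is a star centred at $x_1$, and $S$ is again a $\wedge$-tree set.

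The part that requires real care is (3), where I would show that every element of $S$ covers at most one element. Fix $x_k$ with $k\ge3$. Its proper divisors in $S$ all have index $<k$, and by hypothesis at most one of them, say $x_{i_k}$ with $i_k\ge2$, is distinct from $x_1$; the only other possible proper divisor in $S$ is $x_1$ itself. If $x_k$ has no such $x_{i_k}$, then $x_1$ is its only proper divisor and $x_k$ covers only $x_1$. If $x_{i_k}$ exists, then $x_1\mid x_{i_k}\mid x_k$, so $x_1$ is not covered by $x_k$ and again $C_S(x_k)=\{x_{i_k}\}$ is a singleton. The remaining cases $k=1,2$ are immediate ($C_S(x_1)=\emptyset$ and $C_S(x_2)=\{x_1\}$). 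Hence every element covers at most one element, so by the characterisation of $\wedge$-tree sets used in the proof of Theorem \ref{th:A-set} (equivalently, because the diagram is connected with exactly $n-1$ covering edges) the Hasse diagram of $S$ is a tree.

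In all three cases $S$ is a GCD closed $\wedge$-tree set, and Corollary \ref{cor:tree set inv} yields the nonsingularity of $[S]$. The only subtle point, and the one I would be most careful about, is the bookkeeping in part (3): one must use that $x_1$ divides everything to rule out $x_1$ being covered whenever a second, nontrivial divisor $x_{i_k}$ is present, so that the ``at most one nontrivial divisor'' hypothesis really translates into ``covers at most one element.''
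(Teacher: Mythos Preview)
Your approach is correct and is exactly the one the paper takes: the paper's proof consists of the single observation that in each of the three cases $S$ is a GCD closed $\wedge$-tree set, after which Corollary~\ref{cor:tree set inv} applies. You have simply spelled out the verification that the paper leaves implicit. One small remark: in part~(3) your appeal to ``the characterisation of $\wedge$-tree sets used in the proof of Theorem~\ref{th:A-set}'' cites the implication in the wrong direction (that proof uses $\wedge$-tree $\Rightarrow$ each element covers at most one), but your parenthetical edge-count argument is the clean way to get the direction you actually need, and it stands on its own.
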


\begin{remark}
One may also be interested in the invertibility of the so-called power LCM matrix of a GCD closed set $S$ with $(\mathrm{lcm}(x_i,x_j))^\alpha$ as its $ij$ entry, where $\alpha$ is an arbitrary real exponent. A slight modification to the proof of Theorem \ref{th:i-tapaus} would enable us to show that the power LCM matrix $[(\mathrm{lcm}(x_i,x_j))^\alpha]$ of a double-chain set $S$ is invertible for all $\alpha\geq 1$. However, we do not present the details for the sake of brevity.
\end{remark}

\section{The inertia of LCM matrices on GCD closed double-chain sets}\label{sect:inertia}

In \cite{Alt16} the authors conclude their work by presenting the following two open problems about the inertia of LCM matrices:
\begin{enumerate}
\item How many of the eigenvalues of the classical LCM matrix $[\mathrm{lcm}(i,j)]$ of the set $S=\{1,2,\ldots,n\}$ are positive?
\item How many of the eigenvalues of the general LCM matrix $[\mathrm{lcm}(x_i,x_j)]$ of the set $S=\{x_1,x_2,\ldots,x_n\}$ are positive?
\end{enumerate}
Even the question about the inertia of the classical LCM matrix of the set $S=\{1,2,\ldots,n\}$ is far from trivial. However, it turns out that by the work done earlier in this paper in some cases it is possible to completely determine the inertia of the matrix $[S]$ simply by looking at the semilattice structure of $(S,|)$. 

\begin{theorem}\label{th:inertia}
Suppose that $S$ is a GCD closed set where every element $x_i\in S$ generates a double-chain set in $S$. The inertia of the LCM matrix $[S]$ is the triple $(i_+([S]),i_-([S]),i_0([S])),$ where $i_0([S])=0,$ $$i_-([S])=\left|\{x_k\in S\,\big|\,|C_S(x_k)|=1\}\right|$$ and $$i_+([S])=n-i_-([S])=\left|\{x_k\in S\,\big|\,|C_S(x_k)|\neq1\right|.$$
\end{theorem}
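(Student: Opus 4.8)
The plan is to exploit the congruence factorization \eqref{eq:factorization}, namely $[S]=(\Delta E)\Lambda(\Delta E)^T$, in which $\Delta E$ is invertible and $\Lambda=\mathrm{diag}(\Psi_{S,\frac{1}{N}}(x_1),\ldots,\Psi_{S,\frac{1}{N}}(x_n))$. Since $[S]$ is a real symmetric matrix congruent to the real diagonal matrix $\Lambda$ via the invertible matrix $\Delta E$, Sylvester's law of inertia guarantees that $[S]$ and $\Lambda$ have exactly the same inertia. Thus the entire problem reduces to counting the signs of the diagonal entries of $\Lambda$: the number $i_+([S])$ equals the number of indices $i$ with $\Psi_{S,\frac{1}{N}}(x_i)>0$, the number $i_-([S])$ equals the number of indices $i$ with $\Psi_{S,\frac{1}{N}}(x_i)<0$, and $i_0([S])$ equals the number of vanishing entries.

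First I would dispose of $i_0([S])$. Because every element of $S$ generates a double-chain set, Corollary \ref{cor:1-n} (equivalently, Proposition \ref{th:invertibility} combined with Theorem \ref{th:i-tapaus}) yields $\Psi_{S,\frac{1}{N}}(x_i)\neq0$ for every $i$, so $\Lambda$ has no zero diagonal entry and therefore $i_0([S])=0$. This already forces $i_+([S])+i_-([S])=n$, so it suffices to identify $i_-([S])$ and read off $i_+([S])=n-i_-([S])$.

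Next I would determine the sign of each nonzero $\Psi_{S,\frac{1}{N}}(x_i)$ by invoking the three cases already settled inside the proof of Theorem \ref{th:i-tapaus}. The key observation is that this proof establishes not merely the nonvanishing but the actual sign of $\Psi_{S,\frac{1}{N}}(x_i)$, governed entirely by the cardinality of $C_S(x_i)$: if $C_S(x_i)=\emptyset$ then $\Psi_{S,\frac{1}{N}}(x_i)=\frac{1}{x_i}>0$; if $|C_S(x_i)|=1$, say $x_i$ covers $x_k$, then $\Psi_{S,\frac{1}{N}}(x_i)=\frac{1}{x_i}-\frac{1}{x_k}<0$; and if $|C_S(x_i)|\geq2$ then the cancellation argument of Theorem \ref{th:i-tapaus} gives $\Psi_{S,\frac{1}{N}}(x_i)>0$. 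Hence $\Psi_{S,\frac{1}{N}}(x_i)<0$ exactly when $|C_S(x_i)|=1$, and $\Psi_{S,\frac{1}{N}}(x_i)>0$ exactly when $|C_S(x_i)|\neq1$.

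Assembling these facts yields the claimed inertia: $i_-([S])$ counts precisely the elements $x_k\in S$ with $|C_S(x_k)|=1$, while $i_+([S])$ counts the complementary set of elements with $|C_S(x_k)|\neq1$, and the two totals sum to $n$ since $i_0([S])=0$. I do not anticipate a genuine obstacle here, as Sylvester's law reduces everything to the sign analysis already carried out. The one point demanding care — and the step I would state most explicitly — is that this argument leans on the sign information buried in the \emph{proof} of Theorem \ref{th:i-tapaus} rather than on its statement alone; I would therefore make clear that each of its three cases pinned down the sign of $\Psi_{S,\frac{1}{N}}(x_i)$ and not merely its nonvanishing.
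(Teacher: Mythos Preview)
Your proposal is correct and follows essentially the same route as the paper: invoke the congruence $[S]=(\Delta E)\Lambda(\Delta E)^T$, apply Sylvester's law of inertia, and read off the signs of the diagonal entries $\Psi_{S,\frac{1}{N}}(x_i)$ from the case analysis in the proof of Theorem~\ref{th:i-tapaus}. Your explicit remark that the argument relies on the \emph{sign} information inside that proof (not just nonvanishing) is exactly the point the paper makes as well.
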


\begin{proof}
From Equation \eqref{eq:factorization} we see that the LCM matrix $[S]$ and the diagonal matrix $\Lambda$ are congruent. By Sylvester's law of inertia (see e.g. \cite[Theorem 4.5.8]{Horn}), these matrices have the same inertia. In the proof of Theorem \ref{th:i-tapaus} it was shown that if $x_i$ generates a double-chain set in $S$, then $$|C_S(x_i)|\neq 1\Rightarrow\Psi_{S,\frac{1}{N}}(x_i)>0$$ and $$|C_S(x_i)|= 1\Rightarrow\Psi_{S,\frac{1}{N}}(x_i)<0.$$ The claim follows from this.
\end{proof}

\begin{example}\label{ex2}
Let us consider the inertia of the classical LCM matrix of the set $S=\{1,2,\ldots,n\}$. It is easy to see that some element $i\in S$ covers exactly one element with respect to divisor relation $|$ if and only if $i$ is a prime power greater than one (if $p\,|\,i$ and $q\,|\,i$, where $p$ and $q$ are two distinct primes, then $\frac{i}{p}\lessdot i$ and $\frac{i}{q}\lessdot i$). For small $n$ this observation together with Theorem \ref{th:inertia} gives a simple way of determine the inertia of the matrix $[S]$. For example, if $n=12$ we have $i_-([S])=8$ and $i_+([S])=4$. It would also be possible to draw the same conclusion from the Hasse diagram of $(S,|)$, see Figure \ref{fig:ex2}. However, if $n\geq 30$, then every element in $(S,|)$ does not generate a double-chain set in $S$ and for such elements $x_i$ the value $\Psi_{S,\frac{1}{N}}(x_i)$ needs to be calculated from Equation \eqref{eq:psi} (see Example \ref{ex4}).
\end{example}

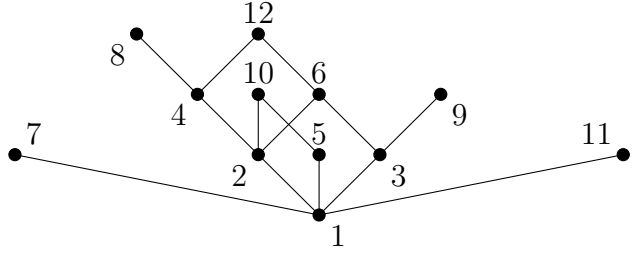
\begin{figure}[htb!]
\centering
{
\begin{tikzpicture}[scale=0.8]
\draw (3,0)--(0,3);
\draw [fill] (1,2) circle [radius=0.1];
\draw [fill] (2,1) circle [radius=0.1];
\draw [fill] (3,0) circle [radius=0.1];
\draw [fill] (2,2) circle [radius=0.1];
\draw [fill] (3,1) circle [radius=0.1];
\draw [fill] (2,3) circle [radius=0.1];
\draw [fill] (3,2) circle [radius=0.1];
\draw [fill] (4,1) circle [radius=0.1];
\draw [fill] (5,2) circle [radius=0.1];
\draw [fill] (-2,1) circle [radius=0.1];
\draw [fill] (0,3) circle [radius=0.1];
\draw [fill] (8,1) circle [radius=0.1];
\draw (3,0)--(5,2);
\draw (3,0)--(3,1);
\draw (2,1)--(2,2);
\draw (3,1)--(2,2);
\draw (3,0)--(-2,1);
\draw (2,1)--(3,2);
\draw (1,2)--(2,3);
\draw (4,1)--(2,3);
\draw (3,0)--(8,1);
\node [below right] at (3,0) {$1$};
\node [below left] at (2,1) {$2$};
\node [below left] at (1,2) {$4$};
\node [below left] at (0,3) {$8$};
\node [below right] at (4,1) {$3$};
\node [below right] at (5,2) {$9$};
\node [above right] at (-2,1) {$7$};
\node [above] at (3,1) {$5$};
\node [above] at (2,2) {$10$};
\node [above] at (3,2) {$6$};
\node [above] at (2,3) {$12$};
\node [above left] at (8,1) {$11$};
\end{tikzpicture}
}
\caption{The Hasse diagram of the semilattice $(\{1,2,\ldots,12\},|)$ in Example \ref{ex2}.}\label{fig:ex2}
\end{figure}

\begin{example}\label{ex3}
Next we take a new look at some of the semilattice structures in Example \ref{ex1}. If $S$ is a GCD closed set whose Hasse diagram is isomorphic to the one presented in
\begin{itemize}
\item Figure \ref{fig:ex1} (a), then $i_-([S])=n-1$ and $i_+([S])=1$ (the same is true for any $A$-set $S$),
\item Figure \ref{fig:ex1} (b) or (c), then $i_-([S])=n-2$ and $i_+([S])=2$,
\item Figure \ref{fig:ex1} (d), then $i_-([S])=n-3$ and $i_+([S])=3$.
\end{itemize}
\end{example}

We are now in a position to give a more through consideration of GCD closed semilattices isomorphic to the cube semilattice (see Figure \ref{fig:ex1} (e)). We shall see that if some element $x_i$ does not generate a double-chain set in $S$, then there is no shortcut in determining the inertia of the matrix $[S]$ but the only choice is to calculate the respective value of $\Psi_{S,\frac{1}{N}}(x_i)$.

\begin{example}\label{ex4}
Suppose that the set $S=\{x_1,x_2,\ldots,x_8\}$ is GCD closed and isomorphic to the cube semilattice in Figure \ref{ex1} (e). Since every other element except for the greatest element $x_8$ generates a double-chain set in $S$, we may deduce that at least four of the eigenvalues of the matrix $[S]$ are positive and at least three of them are negative. However, when it comes to the last eigenvalue, anything is possible:
\begin{itemize}
\item Suppose that $S=\{1,2,3,5,6,10,15,30\}$. Then we have
\[
\Psi_{S,\frac{1}{N}}(x_8)=\frac{1}{30}-\frac{1}{15}-\frac{1}{10}-\frac{1}{6}+\frac{1}{5}+\frac{1}{3}+\frac{1}{2}-\frac{1}{1}=-\frac{4}{15}<0,
\]
and therefore $i_-([S])=4$, $i_+([S])=4$ and $i_0([S])=0$.
\item Suppose that $S=\{1,2,3,5,70,78,255,46410\}$. Then we have
\[
\Psi_{S,\frac{1}{N}}(x_8)=\frac{1}{464100}-\frac{1}{255}-\frac{1}{78}-\frac{1}{70}+\frac{1}{5}+\frac{1}{3}+\frac{1}{2}-\frac{1}{1}>\frac{2}{1000}>0,
\]
and therefore $i_-([S])=3$, $i_+([S])=5$ and $i_0([S])=0$.
\item Suppose that $S=\{1,2,3,5,66,70,255,39270\}$. Then we have
\[
\Psi_{S,\frac{1}{N}}(x_8)=\frac{1}{39270}-\frac{1}{255}-\frac{1}{70}-\frac{1}{66}+\frac{1}{5}+\frac{1}{3}+\frac{1}{2}-\frac{1}{1}=0,
\]
and therefore $i_-([S])=3$, $i_+([S])=4$ and $i_0([S])=1$.
\end{itemize}
\end{example}

So far we have seen the usefulness of our results in determining the inertia of the LCM matrix of a certain individual GCD closed set, but is it possible to state something about the inertia that holds for all LCM matrices of GCD closed sets of order $n$ in general?  The following theorem gives some answers to this question. 

\begin{theorem}\label{th:inertia bounds}
Suppose that the set $S$ is GCD closed with $n\geq 3$ elements. Then $i_+([S])\geq 1$ and
\[
2\leq i_-([S])\leq n-1.
\]
\end{theorem}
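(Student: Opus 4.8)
The plan is to exploit the congruence factorization \eqref{eq:factorization}, which via Sylvester's law of inertia (see \cite[Theorem 4.5.8]{Horn}) tells us that the inertia of $[S]$ coincides with the number of positive, negative and zero entries among $\Psi_{S,\frac{1}{N}}(x_1),\ldots,\Psi_{S,\frac{1}{N}}(x_n)$. Thus the entire theorem reduces to pinning down the signs of just three of these values, namely those attached to the three smallest elements $x_1,x_2,x_3$. The elementary fact I would invoke repeatedly is that for a GCD closed set the smallest element $x_1$ divides every other element: since $\gcd(x_1,x_i)\in S$ and $\gcd(x_1,x_i)\leq x_1$, minimality forces $\gcd(x_1,x_i)=x_1$, whence $x_1\,|\,x_i$.

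First I would handle the positive side. Directly from the definition $\Psi_{S,\frac{1}{N}}(x_1)=\frac{1}{x_1}>0$, so $\Lambda$ has at least one positive diagonal entry and therefore $i_+([S])\geq 1$. Since $i_+([S])+i_-([S])+i_0([S])=n$ with $i_+([S])\geq 1$ and $i_0([S])\geq 0$, this immediately gives $i_-([S])\leq n-1$, disposing of both the claim $i_+([S])\geq 1$ and the stated upper bound in one stroke.

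Next I would establish $i_-([S])\geq 2$ by showing that both $\Psi_{S,\frac{1}{N}}(x_2)$ and $\Psi_{S,\frac{1}{N}}(x_3)$ are negative. For this I recall from the proof of Theorem \ref{th:i-tapaus} that whenever $|C_S(x_i)|=1$, say $C_S(x_i)=\{x_k\}$, one has $\Psi_{S,\frac{1}{N}}(x_i)=\frac{1}{x_i}-\frac{1}{x_k}<0$ (equivalently, $\mathrm{meetcl}(C_S(x_i))=\{x_k\}$ forces $\mu_S(x_j,x_i)$ to vanish in \eqref{eq:psi} except at $x_j\in\{x_i,x_k\}$). Since $x_1\,|\,x_2$ and $x_1$ is the only element strictly below $x_2$, we have $C_S(x_2)=\{x_1\}$ and hence $\Psi_{S,\frac{1}{N}}(x_2)<0$.

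The only place requiring genuine (if brief) case analysis --- and hence the main obstacle --- is verifying $|C_S(x_3)|=1$. The elements strictly below $x_3$ lie among $x_1,x_2$, and $x_1\,|\,x_3$ always holds. If $x_2\,|\,x_3$, then $x_1\,|\,x_2\,|\,x_3$ places $x_2$ between $x_1$ and $x_3$, so $C_S(x_3)=\{x_2\}$; if $x_2\nmid x_3$, then $x_1$ is the unique divisor of $x_3$ lying below it, so $C_S(x_3)=\{x_1\}$. Either way $|C_S(x_3)|=1$, whence $\Psi_{S,\frac{1}{N}}(x_3)<0$. Combining, $\Lambda$ carries at least the two negative entries $\Psi_{S,\frac{1}{N}}(x_2)$ and $\Psi_{S,\frac{1}{N}}(x_3)$, giving $i_-([S])\geq 2$ and completing all three bounds. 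I do not expect any serious difficulty beyond this case split, since every ingredient is either the definition of $\Psi_{S,\frac{1}{N}}$, the divisibility property of $x_1$, or Sylvester's law.
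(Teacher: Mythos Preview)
Your proposal is correct and follows essentially the same route as the paper: both arguments use the congruence \eqref{eq:factorization} together with Sylvester's law to reduce the inertia bounds to the signs of $\Psi_{S,\frac{1}{N}}(x_1),\Psi_{S,\frac{1}{N}}(x_2),\Psi_{S,\frac{1}{N}}(x_3)$, and both determine those signs via $|C_S(x_1)|=0$ and $|C_S(x_2)|=|C_S(x_3)|=1$ as in the proof of Theorem~\ref{th:i-tapaus}. The paper's proof is terser---it simply asserts $|C_S(x_2)|=|C_S(x_3)|=1$ without argument---whereas you spell out the (easy) case split for $x_3$ and the divisibility property of $x_1$; this is a difference in level of detail, not in approach.
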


\begin{proof}
Without the loss of generality we may assume that the elements of $S$ are indexed in a non-decreasing order (in other words, the condition $x_i\,|\,x_j\Rightarrow i\leq j$ is satisfied). Since the minimum element $x_1$ trivially generates a double-chain set in $S$ and $|C_S(x_1)|=0$, we must have $i_+([S])\geq 1$.  The lower bound for $i_-([S])$ follows from the fact that the elements $x_2$ and $x_3$ also generate a double-chain set in $S$ and we have $|C_S(x_2)|=|C_S(x_3)|=1$. The upper bound for $i_-([S])$ is again trivial, since we have $i_+([S])\geq 1$.
\end{proof}

\begin{remark}
It should be noted that the bounds for $i_-([S])$ given in Theorem \ref{th:inertia bounds} cannot be improved without making further assumptions. For example, if $S=\{1,p,q,pq\}$, where $p$ and $q$ are distinct prime numbers, we have $i_-([S])=i_+([S])=2$. On the other hand, if the set $S$ is a $\wedge$-tree set where every element apart from $x_1$ covers at most one element, we have $i_-([S])= n-1$.
\end{remark}

Finding a sharp upper bound for $i_+([S])$ when $n=|S|$ is fixed appears to be much more difficult than it was with $i_-([S])$, and it is even more difficult is to find semilattice structures that maximize $i_+([S])$ (see Problem \ref{problem} at the end of this paper). Increasing the number of positive eigenvalues tends to (at least to some extent) increase the number of negative eigenvalues as well. However, the following examples show that there exist semilattice structures for which the ratio $\frac{i_+([S])}{n}$ is extremely close to $1$.

\begin{example}\label{ex:pq}
Let $p$ and $q$ be distinct prime numbers and let $m>1$ be a positive integer. We define
\[
S=\{p^kq^l\,\big|\,0\leq k,l\leq m-1\},
\]
where the structure of $(S,|)$ is being illustrated in Figure \ref{fig:pq-ex} (and $m^2=n$). The set $S$ is clearly a double-chain set (every element covers at most two elements). Moreover, there are exactly $2(m-1)$ prime power elements $p^k$ and $q^l$ ($k,l>0$) that cover exactly one element in $S$. Thus we have $i_-([S])=2m-2$, $i_+([S])=(m-1)^2+1$ and
\[
\frac{i_+([S])}{n}=\frac{m^2-2m+2}{m^2}\to1
\]
as $m\to\infty$.
\end{example}

\begin{figure}[ht]
\tikzstyle{every node}=[fill,circle,inner sep=1.5pt,scale=0.9]
\centering
\begin{tikzpicture}
\node (a) at (0,0)  {};
  \node (b) at (-1,1)  {};
  \node (c) at (-2,2)  {};
  \node (d) at (-3,3) {};
  \node (e) at (-4,4) {};
  \node (f) at (1,1) {};
  \node (g) at (2,2) {};
  \node (h) at (3,3) {};
  \node (i) at (4,4) {};
  \node (j) at (0,2) {};
  \node (k) at (-1,3) {};
  \node (l) at (-2,4) {};
	\node (m) at (-3,5) {};
  \node (n) at (1,3) {};
  \node (o) at (0,4) {};
  \node (p) at (-1,5) {};
  \node (q) at (-2,6) {};
	\node (r) at (2,4) {};
  \node (s) at (1,5) {};
  \node (t) at (0,6) {};
  \node (u) at (-1,7) {};
	\node (v) at (3,5) {};
  \node (w) at (2,6) {};
  \node (x) at (1,7) {};
  \node (y) at (0,8) {};
	\node[draw=none, fill=none] [above right] at (y) {$p^{m-1}q^{m-1}$};
	\node[draw=none, fill=none] [below right] at (a) {$1$};
	\node[draw=none, fill=none] [below right] at (f) {$q$};
	\node[draw=none, fill=none] [below left] at (b) {$p$};
	\node[draw=none, fill=none] [right] at (j) {\ $pq$};
	\node[draw=none, fill=none] [below right] at (i) {$q^{m-1}$};
	\node[draw=none, fill=none] [above right] at (v) {$pq^{m-1}$};
	\node[draw=none, fill=none] [below left] at (e) {$p^{m-1}$};
	\node[draw=none, fill=none] [above left] at (m) {$p^{m-1}q$};
	\node[draw=none, fill=none] [above right] at (x) {$p^{m-2}q^{m-1}$};
	\node[draw=none, fill=none] [above left] at (u) {$p^{m-1}q^{m-2}$};
	\path (a) edge (e);
	\path (a) edge (i);
	\path (f) edge (m);
	\path (g) edge (q);
	\path (h) edge (u);
	\path (i) edge (y);
	\path (c) edge (w);
	\path (d) edge (x);
	\path (e) edge (y);
	\path (b) edge (v);
	\end{tikzpicture}
\caption{The Hasse diagram of $(S,|)$ defined in Example \ref{ex:pq}.}
\label{fig:pq-ex}
\end{figure} 

\begin{example}\label{ex:pq2}
Let $p_1,\ldots,p_m$ be distinct prime numbers and define $$S=\{1\}\cup\{p_1,\ldots,p_m\}\cup\{p_ip_j\,\big|\,1\leq i<j\leq m\},$$
see Figure \ref{fig:pq2-ex}. It is easy to see that $S$ is both GCD closed and a double-chain set, and thus we have
\[
i_-([S])=m,\quad i_+([S])=1+\binom{m}{2}=\frac{1}{2}(m^2-m+2)
\]
and $n=\frac{1}{2}(m^2+m+2).$

\end{example}

\begin{figure}[ht]
\tikzstyle{every node}=[fill,circle,inner sep=1.5pt]
\centering
\begin{tikzpicture}
\node (a) at (0,0)  {};
  \node (b) at (-3,2)  {};
  \node (c) at (-1,2)  {};
  \node (d) at (1,2) {};
  \node (e) at (3,2) {};
  \node (f) at (-4,4) {};
  \node (g) at (-2,4) {};
  \node (h) at (-0.5,4) {};
  \node (i) at (0.5,4) {};
  \node (j) at (2,4) {};
  \node (k) at (4,4) {};
  \node[draw=none, fill=none] [below left] at (b) {$p_1$};
	\node[draw=none, fill=none] [below right] at (a) {$1$};
	\node[draw=none, fill=none] [below left] at (c) {$p_2$};
	\node[draw=none, fill=none] [below right] at (e) {$p_m$};
	\node[draw=none, fill=none] [above] at (f) {$p_1p_2$};
	\node[draw=none, fill=none] [above] at (h) {$p_1p_m$};
	\node[draw=none, fill=none] [above right] at (j) {$p_2p_m$};
	\path (a) edge (b);
	\path (a) edge (c);
	\path (a) edge (d);
	\path (a) edge (e);
	\path (b) edge (f);
	\path (b) edge (g);
	\path (b) edge (h);
	\path (c) edge (f);
	\path (c) edge (j);
	\path (c) edge (i);
	\path (d) edge (g);
	\path (d) edge (i);
	\path (d) edge (k);
	\path (e) edge (k);
	\path (e) edge (h);
	\path (j) edge (e);
	\end{tikzpicture}
\caption{The Hasse diagram of $(S,|)$ defined in Example \ref{ex:pq2} (for $m=4$).}
\label{fig:pq2-ex}
\end{figure} 

\begin{example}\label{ex:pq3}
Let $p_1,p_2,\ldots,p_m,q,r$ be distinct prime numbers and let us define
\[
S=\bigcup_{i=1}^m \{r^{i-1}q^kp_i^l\,|\,0\leq k,l\leq m-1\}.
\]
The structure of $(S,|)$ is illustrated in Figure \ref{fig:pq3-ex}. It can be shown that $S$ is a GCD closed set as well as a double-chain set and we have
\[
i_-([S])=m^2+m-2,\quad i_+([S])=m^3-m^2-m+2 \quad\text{and}\quad n=m^3.
\]
\end{example}

\begin{figure}[htb]
\tikzstyle{every node}=[fill,circle,inner sep=1.5pt]
\centering
\begin{tikzpicture}
\node (a1) at (0,0)  {};
  \node (b1) at (-1,1)  {};
  \node (c1) at (-2,2)  {};
  \node (d1) at (-3,3) {};
  \node (e1) at (0,2) {};
  \node (f1) at (-1,3) {};
  \node (g1) at (-2,4) {};
  \node (h1) at (-3,5) {};
  \node (i1) at (0,4) {};
  \node (j1) at (-1,5) {};
  \node (k1) at (-2,6) {};
	\node (l1) at (-3,7) {};
  \node (m1) at (0,6) {};
  \node (n1) at (-1,7) {};
  \node (o1) at (-2,8) {};
  \node (p1) at (-3,9) {};
	\node (a2) at (4,2)  {};
  \node (b2) at (3,3)  {};
  \node (c2) at (2,4)  {};
  \node (d2) at (1,5) {};
  \node (e2) at (4,4) {};
  \node (f2) at (3,5) {};
  \node (g2) at (2,6) {};
  \node (h2) at (1,7) {};
  \node (i2) at (4,6) {};
  \node (j2) at (3,7) {};
  \node (k2) at (2,8) {};
	\node (l2) at (1,9) {};
  \node (m2) at (4,8) {};
  \node (n2) at (3,9) {};
  \node (o2) at (2,10) {};
  \node (p2) at (1,11) {};
	\node (a3) at (8,4)  {};
  \node (b3) at (7,5)  {};
  \node (c3) at (6,6)  {};
  \node (d3) at (5,7) {};
  \node (e3) at (8,6) {};
  \node (f3) at (7,7) {};
  \node (g3) at (6,8) {};
  \node (h3) at (5,9) {};
  \node (i3) at (8,8) {};
  \node (j3) at (7,9) {};
  \node (k3) at (6,10) {};
	\node (l3) at (5,11) {};
  \node (m3) at (8,10) {};
  \node (n3) at (7,11) {};
  \node (o3) at (6,12) {};
  \node (p3) at (5,13) {};
  \node[draw=none, fill=none] [below left] at (b1) {$q$};
	\node[draw=none, fill=none] [below right] at (a1) {$1$};
	\node[draw=none, fill=none] [right] at (e1) {$p_1$};
	\node[draw=none, fill=none] [right] at (m1) {$p_1^{m-1}$};
	\node[draw=none, fill=none] [right] at (p1) {$p_1^{m-1}q^{m-1}$};
	\node[draw=none, fill=none] [above] at (-3.2,4.5) {$p_1q^{m-1}$\ \ };
	\node[draw=none, fill=none] [below left] at (d1) {$q^{m-1}$};
	\node[draw=none, fill=none] [below] at (b2) {$qr$};
	\node[draw=none, fill=none] [below right] at (a2) {$r$};
	\node[draw=none, fill=none] [right] at (e2) {$p_2r$};
	\node[draw=none, fill=none] [right] at (m2) {$p_2^{m-1}r$};
	\node[draw=none, fill=none] [right] at (p2) {$p_2^{m-1}q^{m-1}r$};
	\node[draw=none, fill=none] [above] at (0.8,6.3) {$p_2q^{m-1}r$\ \ };
	\node[draw=none, fill=none] [below] at (0.8,5.2) {$q^{m-1}r$};
	\node[draw=none, fill=none] [left] at (b3) {$qr^{m-1}$};
	\node[draw=none, fill=none] [below right] at (a3) {$r^{m-1}$};
	\node[draw=none, fill=none] [right] at (e3) {$p_mr^{m-1}$};
	\node[draw=none, fill=none] [right] at (m3) {$p_m^{m-1}r^{m-1}$};
	\node[draw=none, fill=none] [right] at (p3) {$p_m^{m-1}q^{m-1}r^{m-1}$};
	\node[draw=none, fill=none] [above] at (5.2,8) {$p_mq^{m-1}r^{m-1}$};
	\node[draw=none, fill=none] [left] at (5.7,7.2) {$q^{m-1}r^{m-1}$};
	\path (a1) edge (d1);
	\path (e1) edge (h1);
	\path (i1) edge (l1);
	\path (m1) edge (p1);
	\path (d1) edge (p1);
	\path (c1) edge (o1);
	\path (b1) edge (n1);
	\path (a1) edge (m1);
  \path (a2) edge (d2);
	\path (e2) edge (h2);
	\path (i2) edge (l2);
	\path (m2) edge (p2);
	\path (d2) edge (p2);
	\path (c2) edge (o2);
	\path (b2) edge (n2);
	\path (a2) edge (m2);
  \path (a3) edge (d3);
	\path (e3) edge (h3);
	\path (i3) edge (l3);
	\path (m3) edge (p3);
	\path (d3) edge (p3);
	\path (c3) edge (o3);
	\path (b3) edge (n3);
	\path (a3) edge (m3);
	\path (a1) edge (a3);
	\path (b1) edge (b3);
	\path (c1) edge (c3);
	\path (d1) edge (d3);
	\end{tikzpicture}
\caption{The Hasse diagram of $(S,|)$ defined in Example \ref{ex:pq3}.}
\label{fig:pq3-ex}
\end{figure} 

We conclude our study by presenting the following open problem about $i_+([S])$.

\begin{problem}\label{problem}
Let us consider the sequence $(a_n)_{n=1}^\infty$, where
\[
a_n=\max \{i_+([S])\ \big|\ S\subset\Zset^+\text{\ is GCD closed set with\ }|S|=n\}.
\]
For example, we have $a_1=a_2=a_3=1$, $a_4=a_5=2$ and $a_6=3$. Moreover, the previous examples show that $\lim_{n\to\infty}\frac{a_n}{n}=1$. What is the $n$-th term of the sequence?
\end{problem}

It appears to be a rather nontrivial task to solve Problem \ref{problem} completely, but it should not be too difficult to find a good lower bound for $a_n$ by using the ideas presented in the previous examples. In fact, it seems well possible that for certain values of $n$ the structure that maximizes $i_+([S])$ is the one presented in Example \ref{ex:pq2}.

\noindent {\bf Acknowledgements} The authors wish to thank the reviewers for many valuable comments that helped us to improve this paper significantly. The corresponding author also wishes to thank the Finnish Cultural Foundation, Pirkanmaa Regional Fund for supporting the writing of this article.


\begin{thebibliography}{99}

\bibitem{Alt17} E. Altinisik and T. Altintas, A note on the singularity of LCM matrices on GCD-closed sets with 9 elements, {\it Journal of Sciences and Arts} 3 (40): 413--422 (2017).

\bibitem{Alt16} E. Altinisik and S. B\"uy\"ukköse, On bounds for the smallest and the largest eigenvalues of GCD and LCM matrices, {\it Math. Inequal. Appl.} 19 (1): 117--125 (2016).

\bibitem{Alt05} E. Altinisik, B. E. Sagan and N. Tuglu, GCD matrices, posets, and nonintersecting paths, {\it Linear Multilinear Algebra} 53: 75--84 (2005).

\bibitem{BesLigh89} S.~Beslin and S.~Ligh, Greatest common divisor matrices, {\it Linear Algebra Appl.} 174: 69--76 (1989).

\bibitem{Bour92} K.~Bourque and S.~Ligh, On GCD and LCM matrices, {\it Linear Algebra Appl.} 174: 65--74 (1992).

%\bibitem{HauSil96} P.~Haukkanen and J.~Sillanp\"a\"a, Some analogues of Smith's determinant, {\it Linear Multilinear Algebra} 41: 233--244 (1996).

\bibitem{HauToth} P.~Haukkanen and L.~T\'oth, Inertia, positive definiteness and $\ell_p$ norm of GCD and LCM matrices and their unitary analogs, {\it Linear Algebra Appl.} 558: 1--24 (2018).

\bibitem{HauWanSil} P.~Haukkanen, J.~Wang and J.~Sillanp\"a\"a, On Smith's determinant, {\it Linear Algebra Appl.} 258: 251--269 (1997).

\bibitem{Hong98} S.~Hong, On LCM matrices on GCD-closed sets, {\it Southeast Asian Bull. Math.} 22: 381--384 (1998).

\bibitem{Hong99} S.~Hong, On the Bourque-Ligh conjecture of least common multiple matrices, {\it J. Algebra} 218: 216--228 (1999).

\bibitem{Hong04} S.~Hong, Notes on power LCM matrices, {\it Acta Arith.} 111 (2): 165--177 (2004).

\bibitem{Hong06} S.~Hong, Nonsingularity of matrices associated with classes of arithmetical functions on lcm-closed sets, {\it Linear Algebra Appl.} 416: 124--134 (2006).

\bibitem{HongSun} S.~Hong and Q.~Sun, Determinants of matrices associated with incidence functions on posets, {\it Czechoslovak Math. J.} 54 (129): 431--443 (2004).

\bibitem{Horn} R.~A.~Horn and C.~R.~Johnson, {\it Matrix Analysis}, Cambridge University Press, 2nd printing, London, 1985.

\bibitem{IlmKaar} P.~Ilmonen and V.~Kaarnioja, Generalized eigenvalue problems for meet and join matrices on semilattices, {\it Linear Algebra Appl.} 536: 250--273 (2018).

\bibitem{ISmo} I. Korkee, On meet and join matrices on $A$-sets and related sets, {\it Notes Number Theory Discrete Math.} 10 (3): 57--67 (2004).

\bibitem{KorMatHau} I.~Korkee, M.~Mattila and P.~Haukkanen, A lattice-theoretic approach to Bourque-Ligh conjecture, {\it Linear Multilinear Algebra}  DOI: 10.1080/03081087.2018.1494695 (2018).

\bibitem{Li} M.~Li, Notes on Hong's conjectures of real number power LCM matrices, {\it J. Algebra} 315: 654--664 (2007).

\bibitem{MatHau2} M.~Mattila and P.~Haukkanen, On the positive definiteness and eigenvalues of meet and join matrices, {\it Discrete Mathematics} 326: 9--19 (2014).

\bibitem{MatHau4} M.~Mattila and P.~Haukkanen, Studying the various properties of MIN and MAX matrices -- elementary vs. more advanced methods, {\it Spec. Matrices} 2016 (4): 101--109 (2016).

\bibitem{MatHau3} M.~Mattila, P.~Haukkanen and J.~M\"antysalo, Studying the singularity of LCM-type matrices via semilattice structures and their M\"obius functions, {\it J. Combin. Theory Ser. A} 135: 181--200 (2015).

\bibitem{Ovall} J.~S.~Ovall, An analysis of GCD and LCM matrices via the $LDL^T$-factorization, {\it Electron. J. Linear Algebra} 11: 51--58 (2004).

\bibitem{Bhat}  B.~V.~Rajarama Bhat, On greatest common divisor matrices and their applications, {\it Linear Algebra Appl.} 158: 77--97 (1991). 

\bibitem{Shen} Z.~C.~Shen, GCD and LCM power matrices, {Fibonacci Quart.} 34 (4): 290--297 (1996).

\bibitem{Smi} H.~J.~S.~Smith, On the value of a certain arithmetical determinant, {\it Proc.\ London Math.\ Soc.} 7: 208--212 (1875/76).

\end{thebibliography}
\end{document}